\numberwithin{equation}{section}
\newcommand{\TT}{\mathbb{T}}
\newcommand{\DD}{\mathbb{D}}
\newcommand{\RR}{\mathbb{R}}
\newcommand{\NN}{\mathbb{N}}
\newcommand{\dd}{{\rm d}}
\renewcommand{\u}{{\bf u}}
\renewcommand{\v}{{\bf v}}
\renewcommand{\k}{{\bf k}}
\renewcommand{\r}{{\rm reg}}
\newcommand{\s}{{\rm sing}}
\newcommand{\A}{{\sf A}}
\newcommand{\J}{{\sf J}^\mu}
\renewcommand{\L}{{\sf L}}
\renewcommand{\O}{\mathcal{O}}
\renewcommand{\S}{{\sf S}}
\DeclareMathOperator{\Id}{{\sf Id}}
\DeclareMathOperator{\curl}{curl}
\newcommand{\ie}{{\em i.e.}~}
\newcommand{\eg}{{\em e.g.}~}
\newcommand{\id}[1]{\left\vert_{_{#1}}\right.}
\DeclarePairedDelimiter\norm{\big\lvert}{\big\rvert}
\DeclarePairedDelimiter\Norm{\big\lVert}{\big\rVert}
\newcommand{\eqdef}{\stackrel{\rm def}{=}}
\newtheorem{Theorem}{Theorem}[section]
\newtheorem{Proposition}[Theorem]{Proposition}
\newtheorem{Lemma}[Theorem]{Lemma}
\newtheorem{Remark}[Theorem]{Remark}
\title{Rigorous justification of the Favrie-Gavrilyuk approximation to the Serre-Green-Naghdi model}
\author{Vincent Duchêne%
\thanks{IRMAR - UMR6625, CNRS and Univ. Rennes 1. \href{mailto:vincent.duchene@univ-rennes1.fr}{vincent.duchene@univ-rennes1.fr}}
}
\date{\today}
\begin{document}
\thispagestyle{empty}
\maketitle

\begin{abstract}
The (Serre-)Green-Naghdi system is a non-hydrostatic model for the propagation of surface gravity waves in the shallow-water regime. Recently, Favrie and Gavrilyuk proposed in~\cite{FavrieGavrilyuk17} an efficient way of numerically computing approximate solutions to the Green-Naghdi system. The approximate solutions are obtained through solutions of an augmented quasilinear system of balance laws, depending on a parameter. In this work, we provide quantitative estimates showing that any regular solution of the Green-Naghdi system is the limit of solutions to the Favrie-Gavrilyuk system as the parameter goes to infinity, provided the initial data of the additional unknowns is well-chosen. The problem is therefore a singular limit related to low Mach number limits with additional difficulties stemming from the fact that both order-zero and order-one singular components are involved.
\end{abstract}

\section{Introduction}

\subsection{Motivation}

The (Serre-)Green-Naghdi system arises as a model for the propagation of weakly dispersive surface gravity waves. It has been derived many times in the litterature, and in particular in~\cite{Serre53,SuGardner69,GreenNaghdi76,MilesSalmon85,Seabra-SantosRenouardTemperville87}. More recently~\cite{Lannes}, it has been rigorously justified as an asymptotic model for the so-called water-waves system in the shallow-water regime. It can be seen as a second-order model refining the Saint-Venant system so as to take into account dispersive effects, and as such has received a fair amount of attention. Let us write down one of the many formulations of the Green-Naghdi system. Denoting $h(t,X)$ the depth of the water and $\u(t,X)$ the layer-averaged horizontal velocity at time $t\in\RR$ and horizontal position $X\in\DD^d$ (where $\DD=\RR$ or $\DD=\TT$ and $d\in\{1,2\}$), the Green-Naghdi system reads
\begin{equation}\label{GN}
\left\{\begin{array}{l}
\partial_t h+\nabla\cdot(h\u)=0,\\ \\
\partial_t \u+g\nabla h+(\u\cdot\nabla)\u+ \frac1{3h}\nabla\big(h^2\ddot{h}\big)={\bf 0},
\end{array}\right.
\end{equation}
where  $g$ is the gravitational acceleration and denoting $\dot h=\partial_t h+\u\cdot \nabla h$ and $\ddot{h}=\partial_t \dot h+\u\cdot \nabla\dot h$.

A difficulty arises when one tries to ---numerically or analytically--- solve the initial-value problem associated with~\eqref{GN} as, after using the equation of mass conservation to rewrite $\ddot h$, it is found necessary to invert the elliptic operator
\[\mathfrak T[h]:\v  \mapsto \v-\frac1{3h}\nabla\big(h^3\nabla\cdot\v).\]
This is only a technical difficulty in the proof of the local well-posedness of the Cauchy problem \cite{Li06,Alvarez-SamaniegoLannes08a,FujiwaraIguchi15,DucheneIsrawi}, but remains a severe issue for practical numerical simulations, as the cost of inverting this operator at each time step can be prohibitive, especially in dimension $d=2$. We refer to~\cite{LeGavrilyukHank10,MitsotakisSynolakisMcGuinness17} and references therein for several numerical schemes adapted to the Green-Naghdi system. The aforementioned issue is addressed in~\cite{LannesMarche15,DuranMarche15}, where the authors introduce a new class of models which enjoy the same precision as the original Green-Naghdi system ---as an asymptotic model for the water-waves systems--- but for which the elliptic operator playing the role of $\mathfrak T[h]$ is independent of time.
A different direction of investigation is proposed in the recent paper by Favrie and Gavrilyuk~\cite{FavrieGavrilyuk17}. By modifying the lagrangian associated with the variational formulation of the Green-Naghdi system, the authors derive a system of balance laws depending on a an augmented set of unknowns and on a free parameter:
\begin{equation}\label{FG}
\left\{\begin{array}{l}
\partial_t h+\nabla\cdot(h\u)=0,\\
\partial_t \u+g\nabla h+(\u\cdot\nabla)\u-\frac\lambda{3h} \nabla\big( \frac\eta{h}(\eta-h)\big)={\bf 0},\\
\partial_t\eta+\u\cdot\nabla\eta=w,\\
\partial_tw+\u\cdot\nabla w=-\frac\lambda{h^2}\big(\eta-h\big).
\end{array}\right.
\end{equation}
 The claim is that in the limit $\lambda\to \infty$, solutions to~\eqref{FG} approach solutions to~\eqref{GN}. Indeed we expect, using the fourth and third equations of~\eqref{FG}:
\[\eta= h+\O(\lambda^{-1}) \quad \text{ and } \quad  \lambda \big(\eta-h\big)= -h^2 \ddot{\eta}= -h^2 \ddot{h}+\O(\lambda^{-1}),\]
and we recover~\eqref{GN} when plugging the truncated approximations in the second equation of~\eqref{FG}.

{\em The aim of this work is to produce quantitative estimates which allow to rigorously prove that the Favrie-Gavrilyuk system~\eqref{FG} generates arbitrarily close solutions to the Green-Naghdi system~\eqref{GN} on the relevant timescale.}

Among other things, our work gives insights to how large $\lambda$ should be set and how initial data for $\eta$ and $w$ should be constructed in order for the corresponding solution to~\eqref{FG} to be a valid approximation to the solution of the Green-Naghdi system~\eqref{GN}; and hence to surface gravity waves in the shallow-water regime.

\subsection{Main results}

The main tool for our results will be energy estimates, which need to be assessed uniformly with respect to the parameter $\lambda$. In order to provide useful results, we also need to provide estimates which are uniform with respect to the other parameters of the system, and in particular the (small) shallowness parameter, which measures the precision of the Green-Naghdi system.

Hence for proper comparison, we start by non-dimensionalizing the systems~\eqref{GN} and~\eqref{FG}. A natural choice of scaling in the shallow-water regime
\footnote{We use the scaled variables
\[{\bf x} \leftarrow {\bf x}/L \quad ; \quad t \leftarrow t\times \sqrt{gh_0}/L, \]
and scaled unknowns
\[\u \leftarrow \u/\sqrt{gh_0} \quad ; \quad h\leftarrow h/h_0 \quad ; \quad b\leftarrow b/h_0 \quad ; \quad \zeta \leftarrow \zeta/h_0.\]

The choice is less obvious for the augmented unknowns which have no direct physical interpretation. In view of their dimension and the expected behaviour as $\lambda\to\infty$, we set
\[ w \leftarrow w/\sqrt{gh_0}\times(L/h_0) \quad ; \quad \eta\leftarrow \eta/h_0 .\]
Thus we scale $w$ differently from $\u$, because the former represents typically a vertical velocity while the latter is the layer-averaged horizontal velocity.
As for $\lambda$, we set
\[\lambda\leftarrow \lambda/(gh_0)\times (L/h_0)^2,\]
Here again this choice reflects the fact that $\lambda$ compares with a vertical acceleration, times a vertical length.
}
 yields respectively
\begin{equation}\label{GN-adim}
\left\{\begin{array}{l}
\partial_t \zeta+\nabla\cdot(h\u)=0,\\ \\
\partial_t \u+\nabla \zeta+(\u\cdot\nabla)\u+ \frac\mu{3h}\nabla\big(h^2\ddot{h}\big)={\bf 0},
\end{array}\right.
\end{equation}
and
\begin{equation}\label{FG-adim}
\left\{\begin{array}{l}
\partial_t \zeta+\nabla\cdot(h\u)=0,\\
\partial_t \u+\nabla \zeta+(\u\cdot\nabla)\u-\frac{\lambda\mu}{3h} \nabla\big( \frac\eta{h}(\eta-h)\big)={\bf 0},\\
\partial_t\eta+\u\cdot\nabla\eta=w,\\
\partial_tw+\u\cdot\nabla w=-\frac{\lambda}{h^2}\big(\eta - h\big).
\end{array}\right.
\end{equation}
Above, $\zeta$ is the dimensionless surface deformation and we will always denote $h=1+\zeta$. The shallowness parameter $\mu$ is the square of the ratio of the typical depth of the layer to the typical horizontal wavelength of the wave, and is assumed to be small in the shallow-water regime: roughly speaking, regular solutions to the Green-Naghdi system~\eqref{GN-adim} approximate corresponding solutions to the water-waves system up to an error of size $\O(\mu^2 t)$ on the ``quasilinear'' time-scale \ie up to a maximal time inversely proportional to the size of the initial data; see~\cite{Lannes}. We aim at proving that solutions to system~\eqref{FG-adim} when $\lambda$ is large and initial data for $(\eta,w)$ are well-prepared approach solutions to the Green-Naghdi system~\eqref{GN-adim}, uniformly with respect to the parameter $\mu$ and on the quasilinear time-scale.

Because system~\eqref{FG-adim2} is a symmetrizable hyperbolic quasilinear system (as it is checked in Section~\ref{S.Hyp} below), the well-posedness of the corresponding initial-value problem is provided by standard theory; see \eg~\cite{Benzoni-GavageSerre07}.
\begin{Theorem}\label{T.WP0}
Let $s\in\RR$ with $s>1+d/2$. Then for any $\lambda,\mu\in(0,\infty)$ and any $U_0=(\zeta_0,\u_0,\eta_0-1,w_0)\in H^s(\DD^d)^{d+3}$ satisfying $h_0=1+\zeta_0\geq h_\star>0$, there exists a maximal time $T^\star>0$ and 
$U=(\zeta,\u,\eta-1,w)\in C^1([0,T^\star)\times\DD^d)^{d+3}$ unique maximal strong solution to~\eqref{FG-adim2} with $U\id{t=0}=U_0$. Moreover, one has $U\in \bigcap_{j=0}^s C^j([0,T^\star);H^{s-j}(\DD^d)^{d+3})$, and
$T^\star=\infty$ or $\lim_{t \nearrow  T^\star}\norm{U}_{W^{1,\infty}}(t)=\infty$.
\end{Theorem}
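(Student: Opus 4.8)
The plan is to read Theorem~\ref{T.WP0} as a direct application of the classical well-posedness theory for symmetrizable hyperbolic quasilinear systems, the symmetric structure being exactly the one exhibited in Section~\ref{S.Hyp}. Since $\lambda$ and $\mu$ are fixed here, no uniformity is required and the argument reduces to checking the hypotheses of a standard theorem such as the one in~\cite{Benzoni-GavageSerre07}; the only genuinely system-specific point, which I expect to be the main obstacle, is the precise form of the continuation criterion.

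First I would recast~\eqref{FG-adim} as a first-order quasilinear system for $U=(\zeta,\u,\eta-1,w)$. Using $h=1+\zeta$ one has $\nabla\cdot(h\u)=\u\cdot\nabla\zeta+h\,\nabla\cdot\u$, while expanding $\nabla\big(\frac\eta h(\eta-h)\big)$ yields smooth coefficients multiplying $\nabla\zeta$ and $\nabla\eta$; both are genuinely order-one contributions. The remaining terms $w$ and $-\frac{\lambda}{h^2}(\eta-h)$ are order zero. The system therefore takes the form $\partial_t U+\sum_{j=1}^d A_j(U)\,\partial_j U=B(U)$, with $A_j$ and $B$ smooth on the region $\{h\ge h_\star>0\}$ (they involve only $1/h$ and $1/h^2$). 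By Section~\ref{S.Hyp} there is a symmetrizer $S(U)$, symmetric positive definite and smooth together with $S(U)^{-1}$ as long as $h$ ranges in a compact subset of $(0,\infty)$, such that each $S(U)A_j(U)$ is symmetric.

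With this structure the existence and uniqueness is the standard Friedrichs--Kato argument, which I would organize as follows. One regularizes the system (Friedrichs mollifiers, or a vanishing-viscosity/iteration scheme) and derives uniform a priori estimates: applying $\partial^\alpha$ for $|\alpha|\le s$, pairing with $S(U)\partial^\alpha U$ in $L^2$ and integrating by parts, the symmetry of $S(U)A_j(U)$ removes the top-order terms, while the commutators $[\partial^\alpha,A_j(U)]\partial_j U$ and the source $\partial^\alpha B(U)$ are handled by tame Moser product and commutator estimates (here $s>1+d/2$ is used, so that $H^s\hookrightarrow W^{1,\infty}$ and $H^s$ is an algebra). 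This produces a differential inequality of the form $\frac{\dd}{\dd t}\|U\|_{H^s}^2\le C\big(\|U\|_{W^{1,\infty}},h_\star^{-1}\big)\,\|U\|_{H^s}^2$, whose right-hand side depends on $U$ only through its Lipschitz norm and the lower bound on $h$. A Gronwall argument closes the estimate on an interval $[0,T]$ depending only on $\|U_0\|_{H^s}$ and $h_\star$; passing to the limit gives a solution in $C([0,T];H^s)$ (strong $H^s$-continuity recovered via the Bona--Smith regularization argument), and an $L^2$ estimate for the difference of two solutions, again using $S(U)$, gives uniqueness. The improved regularity $U\in\bigcap_{j=0}^s C^j([0,T^\star);H^{s-j})$ then follows by bootstrap: from $U\in C([0,T];H^s)$ the equation $\partial_t U=B(U)-\sum_j A_j(U)\partial_j U$ gives $\partial_t U\in C([0,T];H^{s-1})$, and differentiating in time — each time derivative costing one spatial derivative — yields the claim, which in turn gives $U\in C^1([0,T^\star)\times\DD^d)$ since $s>1+d/2$.

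The delicate point is the precise form of the blow-up alternative, because the constant $C$ above degenerates both as $\|U\|_{W^{1,\infty}}\to\infty$ and as $\inf h\to0$, so a priori one would only get a criterion involving both quantities. To see that control of $\|U\|_{W^{1,\infty}}$ alone suffices, I would exploit the transport structure of the mass equation: writing $\dot h=\partial_t h+\u\cdot\nabla h=-h\,\nabla\cdot\u$, integration along the flow of $\u$ gives $h(t)\ge h_\star\exp\big(-\int_0^t\|\nabla\cdot\u(\tau)\|_{L^\infty}\,\dd\tau\big)$, so $\inf h$ remains bounded below by a positive constant on any interval where $\|U\|_{W^{1,\infty}}$ is bounded. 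Consequently, if $T^\star<\infty$ while $\|U\|_{W^{1,\infty}}$ stayed bounded on $[0,T^\star)$, then both $\inf h>0$ and $\sup_{[0,T^\star)}\|U\|_{H^s}<\infty$ would hold, and the local existence statement could be restarted from a time close to $T^\star$ to continue the solution beyond $T^\star$, contradicting maximality. Hence either $T^\star=\infty$ or $\lim_{t\nearrow T^\star}\|U\|_{W^{1,\infty}}(t)=\infty$.
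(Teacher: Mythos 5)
Your proposal is correct and follows essentially the same route as the paper: the paper's entire proof consists of checking (Section~\ref{S.Hyp}) that the system is Friedrichs-symmetrizable on the region $\inf h\geq h_\star>0$ via the explicit symmetrizer $\S$, and then invoking the standard quasilinear symmetric-hyperbolic theory of~\cite{Benzoni-GavageSerre07} --- precisely the machinery you reconstruct by hand. Your extra transport-equation argument, showing that a bound on $\norm{U}_{W^{1,\infty}}$ keeps $\inf h$ bounded away from zero so that the blow-up alternative can be stated in terms of the $W^{1,\infty}$ norm alone, is a worthwhile detail that the paper leaves implicit in its citation, since the textbook continuation criterion would otherwise also allow the solution to exit the hyperbolicity domain.
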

Solutions to the Favrie-Gavrilyuk system are valuable approximations to the Green-Naghdi system (in the sense of consistency) as long as several space an time derivatives of the solutions are uniformly bounded, as stated below.
\begin{Theorem}\label{T.consistency}
Let $s>d/2$ with $s\geq 2$,  $\lambda,\mu\in(0,\infty)$ and $U=(\zeta,\u,\eta-1,w)\in C^2([0,T^\star);H^{s-2}(\DD^d)^{d+3})\cap C^1([0,T^\star);H^{s-1}(\DD^d)^{d+3})\cap C^0([0,T^\star);H^{s}(\DD^d)^{d+3})$ a solution to~\eqref{FG-adim} satisfying $h=1+\zeta\geq h_\star>0$. Then for any $t\in[0,T^\star)$, $(\zeta,\u)$ satisfies
\begin{equation}\label{GN-adim-r}
\left\{\begin{array}{l}
\partial_t \zeta+\nabla\cdot(h\u)=0,\\ \\
\partial_t \u+\nabla \zeta+(\u\cdot\nabla)\u+ \frac\mu{3h}\nabla\big(h^2\ddot{h}\big)=\frac{\mu}{3h}\nabla r,
\end{array}\right.
\end{equation}
with $ r=-h(\eta-h)\ddot{\eta}- h^2(\ddot{\eta}-\ddot{h})\in C^0([0,T^\star);H^{s-2}(\DD^d))$ and therefore
\[ \forall t\in[0,T^\star), \qquad \norm{r}_{H^{s-2}}(t)\leq  C\sum_{j=0}^2\norm{\partial_t^j(\eta-h)}_{H^{s-j}}(t).\] 
with $C=C(h_\star^{-1},\norm{U}_{H^s},\norm{\partial_t U}_{H^{s-1}},\norm{\partial_t^2 U}_{H^{s-2}})$. Moreover, if the right-hand-side is finite, then
\[ \forall t\in[0,T^\star), \qquad\norm{r}_{H^{s-2}}(t)\leq  \lambda^{-1} C' \sum_{j=0}^3 \norm{\partial_t^jw}_{H^{s+1-j}}(t)\]
with $C'=C(h_\star^{-1},\norm{U}_{H^s},\norm{\partial_t U}_{H^{s-1}},\norm{\partial_t^2 U}_{H^{s-2}})$. 
\end{Theorem}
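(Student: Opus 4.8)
The argument splits into an exact algebraic rewriting of the dispersive term, valid pointwise, followed by tame product estimates in Sobolev spaces. Throughout, a dot denotes the material derivative $\dot f=\partial_t f+\u\cdot\nabla f$, so that $\ddot f=\partial_t\dot f+\u\cdot\nabla\dot f$ is linear in $f$; the assumed regularity of $U$ guarantees that all material derivatives appearing below are well-defined elements of the stated spaces.

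First I would eliminate $\lambda$ from the dispersive term. The third and fourth equations of~\eqref{FG-adim} read $\dot\eta=w$ and $\dot w=-\frac\lambda{h^2}(\eta-h)$, whence
\[\ddot\eta=\dot w=-\frac\lambda{h^2}(\eta-h), \qquad\text{i.e.}\qquad \lambda(\eta-h)=-h^2\ddot\eta .\]
Inserting this into the velocity equation of~\eqref{FG-adim} turns $-\frac{\lambda\mu}{3h}\nabla\big(\frac\eta h(\eta-h)\big)$ into $\frac\mu{3h}\nabla(\eta h\ddot\eta)$; adding and subtracting the Green-Naghdi dispersive term $\frac\mu{3h}\nabla(h^2\ddot h)$ then produces exactly~\eqref{GN-adim-r} with $r=h^2\ddot h-\eta h\ddot\eta$. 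Expanding and using once more the linearity $\ddot\eta-\ddot h=\ddot{(\eta-h)}$ shows that this agrees with the announced $r=-h(\eta-h)\ddot\eta-h^2(\ddot\eta-\ddot h)$.

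For the first bound I would start from $r=-h(\eta-h)\ddot\eta-h^2\ddot{(\eta-h)}$ and expand each material derivative via $\ddot f=\partial_t^2 f+(\partial_t\u)\cdot\nabla f+2\,\u\cdot\nabla\partial_t f+\u\cdot\nabla(\u\cdot\nabla f)$. Because $s>d/2$, the tame product estimates $\norm{fg}_{H^c}\lesssim\norm{f}_{H^a}\norm{g}_{H^b}$, valid whenever $c\le\min(a,b)$ and $a+b-c>d/2$ (so in particular $H^s$ is an algebra), apply to every summand. Each term then carries a single factor $\partial_t^j(\eta-h)$ with $0\le j\le2$, estimated in $H^{s-j}$, while the companion factors, assembled from $h,\u,\eta$ and their time derivatives up to order two, are bounded by the quantities entering $C$. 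This yields $r\in C^0([0,T^\star);H^{s-2})$ together with the first estimate.

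For the second bound I would use the same relation in the opposite direction, $\eta-h=-\frac{h^2}\lambda\dot w$ and $\ddot\eta=\dot w$, which upon substitution into $r=-h(\eta-h)\ddot\eta-h^2\ddot{(\eta-h)}$ gives
\[r=\frac{h^3}\lambda\,(\dot w)^2+\frac{h^2}\lambda\,\ddot{\big(h^2\dot w\big)},\]
displaying the factor $\lambda^{-1}$. The main obstacle lies here in the derivative count: the outer material derivative $\ddot{(h^2\dot w)}$ costs two derivatives, so reaching $H^{s-2}$ demands $\dot w\in H^s$, \ie $w\in H^{s+1}$---precisely the additional regularity encoded in the hypothesis that $\sum_{j=0}^3\norm{\partial_t^jw}_{H^{s+1-j}}$ be finite. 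Expanding $\ddot{(h^2\dot w)}$ as above, the four highest-order contributions are governed respectively by $\norm{w}_{H^{s+1}}$, $\norm{\partial_t w}_{H^s}$, $\norm{\partial_t^2 w}_{H^{s-1}}$ and $\norm{\partial_t^3 w}_{H^{s-2}}$, all remaining factors being absorbed into $C'$; the quadratic term calls for the asymmetric estimate $\norm{(\dot w)^2}_{H^{s-2}}\lesssim\norm{\dot w}_{H^{s-1}}\norm{\dot w}_{H^s}$, keeping one $H^{s+1}$-level factor of $w$ and sending the other, controlled by $\norm{U}_{H^s}$ and $\norm{\partial_t U}_{H^{s-1}}$, into $C'$. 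Gathering the $\lambda^{-1}$ prefactors gives the claimed estimate.
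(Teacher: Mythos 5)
Your proposal is correct and follows essentially the same route as the paper's (very terse) proof: derive $r$ by substituting $\lambda(\eta-h)=-h^2\ddot\eta$ from the third and fourth equations of~\eqref{FG-adim}, obtain the first bound by expanding the material derivatives and applying the multilinear product estimates of Lemma~\ref{L.products}, and obtain the second bound by using the last equation to write $\eta-h=-\lambda^{-1}h^2\dot w$ and estimating again with Lemma~\ref{L.products}. Your derivative count for the second estimate (in particular why $w\in H^{s+1}$ is needed, e.g.\ for the term $(\partial_t^2\u)\cdot\nabla w$ when $s$ is close to $d/2$) correctly fills in the details the paper leaves implicit.
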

\begin{proof}
The formula for $r$ comes from straightforward manipulations on~\eqref{FG-adim}, and the first estimate follows from Lemma~\ref{L.products} after expanding $\ddot{\eta}$, $\ddot{h}$. The second estimate is obtained applying Lemma~\ref{L.products} to the last equation of~\eqref{FG-adim}.
\end{proof}
Of course, there is no reason to hope {\em a priori} that the maximal solutions to~\eqref{FG-adim} with initial data in a given ball of $H^s(\DD^d)^{d+3}$ --- or continuously embedded normed spaces --- satisfy the estimates of Theorem~\ref{T.consistency} on a relevant timescale uniformly with respect to the parameters $\lambda$ (large) and $\mu$ (small). The main result of this work is to prove that it is possible to {\em prepare the initial data} so that such property holds.

All our results from now on are restricted to the following set of parameters
\begin{equation}\label{not-Snu}
\mathcal{S}_{\nu}=\{ (\lambda,\mu)\in (0,\infty)^2,\ \lambda^{-1}+\mu+(\lambda\mu)^{-1}\leq \nu\}.
\end{equation}
The first two restrictions in $\mathcal{S}_\nu$ are harmless in our framework, but the second one already hints at a possibly non-uniform behaviour with respect to small values of $\mu$.
Let us further prepare the Favrie-Gavrilyuk system through a change of variables which allows to balance the singular terms in~\eqref{FG-adim}. Introducing
\begin{equation} \label{change}\iota\eqdef (\mu\lambda)^{1/2}(\eta-h) \quad ; \quad \kappa \eqdef \mu^{1/2} h^{-1}w ,\end{equation}
we see that~\eqref{FG-adim} is equivalent to
\begin{equation}\label{FG-adim2}
\left\{\begin{array}{l}
\partial_t \zeta+\nabla\cdot(h\u)=0,\\
\partial_t \u+(\u\cdot\nabla)\u+\nabla\zeta-\frac{1}{3h} \nabla\Big( (\mu\lambda)^{1/2}\iota+ \frac{\iota^2}{h}\Big)={\bf 0},\\
\partial_t\iota+\u\cdot\nabla\iota=\lambda^{1/2}(h\kappa+ \mu^{1/2}h\nabla\cdot\u),\\
\partial_t(h\kappa)+\u\cdot\nabla (h\kappa)=-\lambda^{1/2}h^{-2}\iota.
\end{array}\right.
\end{equation}
The following result shows that one can control solutions to~\eqref{FG-adim} on a time interval uniform with respect to $\lambda$ (sufficiently large) and $\mu$ provided that the initial data is well-prepared.
\begin{Theorem} \label{T.WP-mr}
Let $m,s\in\NN$ with $s>1+d/2$, $1\leq m\leq s$, and $h_\star,M_0,\nu>0$. Set also $\delta_\star\in(0,1)$ if $m=s$. There exist $\nu_\star,\tau,C_0>0$ such that for any $(\lambda,\mu)\in\mathcal{S}_\nu$ satisfying $\lambda\mu\geq \nu_\star$, for any $\tilde\lambda\in[1,\lambda\mu]$ and for any $V\in C^0([0,T^\star);H^s(\DD^d))$ maximal strong solution to~\eqref{FG-adim2} such that one has $h\id{t=0}=1+\zeta\id{t=0}\geq h_\star$ and
\[\sum_{j=0}^{m} \norm{\partial_t^j V}_{H^{s-j}}(0)+\sum_{j=m+1}^{s} \tilde\lambda^{\frac{m-j}2}\norm{\partial_t^j V }_{H^{s-j}} (0)\leq M_0,\]
 one has $T^\star > ( M_0\tau )^{-1}$ and for any $t\in[0, (M_0 \tau)^{-1}]$,
\[\sum_{j=0}^{m} \norm{\partial_t^j V}_{H^{s-j}}(t)+\sum_{j=m+1}^{s} \tilde\lambda^{\frac{m-j}2}\norm{\partial_t^j V }_{H^{s-j}} (t) \leq C_0 M_0  .\]
If $m=s$, we can withdraw the condition $\lambda\mu\geq \nu_\star$ and replace it with the sharper
\[ (1-\delta_\star) (\lambda\mu)^{1/2} \geq \max\left\{|(\kappa h)\id{t=0}|,\frac12 |(\iota h^{-1})\id{t=0}|\right\}.
\]
\end{Theorem}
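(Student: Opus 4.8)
The plan is to establish a priori estimates for the weighted energy functional
\[
\E(t)\eqdef\sum_{j=0}^{m}\norm{\partial_t^j V}_{H^{s-j}}(t)+\sum_{j=m+1}^{s}\t\lambda^{(m-j)/2}\norm{\partial_t^j V}_{H^{s-j}}(t),
\]
working with its square in the energy identities, and then to convert them into the stated lower bound on $T^\star$ and the propagated control through a continuity argument based on the blow-up criterion of Theorem~\ref{T.WP0}. Since the weight $\t\lambda^{(m-j)/2}\le 1$ discounts the top time-derivatives, $\E$ measures proximity to the slow manifold: the choice $\t\lambda=\lambda\mu$ is the weakest norm and $\t\lambda=1$ the strongest, the latter being the usual $\bigcap_j C^j H^{s-j}$ norm. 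Because $\E_{\t\lambda'}\ge\E_{\t\lambda}$ for $\t\lambda'\le\t\lambda$, the content for large $\t\lambda$ is genuinely distinct: even a weak (weighted) control of high time-derivatives at $t=0$ must be shown to propagate, so the differential inequality has to respect the weights.

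The structural input is the symmetric form of~\eqref{FG-adim2} from Section~\ref{S.Hyp}: there is a symmetrizer $\Sigma(V)$, positive definite and depending only on $h$ (hence bounded above and below in terms of $h_\star^{-1}$ and $\norm{V}_{W^{1,\infty}}$), for which the singular part of~\eqref{FG-adim2} is skew-adjoint up to lower order. The change of variables~\eqref{change} is tailored precisely for this, producing two singular scales: an order-one, wave-type coupling $\u\leftrightarrow\iota$ with coefficient $(\lambda\mu)^{1/2}$, symmetrized by the weight stemming from the factor $\tfrac1{3h}$, and an order-zero, oscillator-type coupling $\iota\leftrightarrow h\kappa$ with coefficient $\lambda^{1/2}$, symmetrized by the weight $h^{-2}$. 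In every energy identity the principal singular contributions then cancel, and what survives are commutators of the singular operators with $\partial_t^j$ and with the spatial derivatives $\partial^\alpha$, $|\alpha|\le s-j$, each carrying one of the two singular factors.

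The heart of the proof is a hierarchy of $\Sigma(V)$-weighted $H^{s-j}$ energy estimates on $\partial_t^j V$, $0\le j\le s$, assembled with the weights above; the transport and quadratic terms are bounded by $C(h_\star^{-1},\norm{V}_{W^{1,\infty}})\,\E^2$ via Lemma~\ref{L.products}. The whole difficulty is concentrated in the surviving singular commutators, which couple level $j$ to level $j-1$, and reconciling the two scales against the weight budget is the main obstacle. The order-zero commutators (factor $\lambda^{1/2}$, produced when $\partial_t^j$ falls on $h^{-2}$) are absorbed by the evolution equation itself: since $\partial_t^j(h\kappa)=-\lambda^{1/2}h^{-2}\partial_t^{j-1}\iota+G_{j-1}$ with $G_{j-1}$ free of singular prefactors, every product $\lambda^{1/2}\norm{\partial_t^j(h\kappa)}_{H^{s-j}}\norm{\partial_t^{j-1}\iota}_{H^{s-j}}$ is controlled by $\norm{\partial_t^j(h\kappa)}_{H^{s-j}}^2$ plus lower-order terms, with no residual power of $\lambda$; this step is uniform in $\t\lambda$. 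The order-one commutators (factor $(\lambda\mu)^{1/2}$, coupling $\partial_t^j\iota$ with $\nabla\partial_t^{j-1}\u$ and $\partial_t^j\u$ with $\nabla\partial_t^{j-1}\iota$) are the delicate ones: here I would exploit the skew-symmetry fully, recasting the wave coupling in rescaled unknowns so that its principal part has constant coefficients and integrating by parts, leaving a consecutive-level coupling of size $(\lambda\mu)^{1/2}$. The weight budget between consecutive levels is exactly $\t\lambda^{1/2}\le(\lambda\mu)^{1/2}$, which matches this scale at the critical value $\t\lambda=\lambda\mu$; this is why the admissible range is capped at $\t\lambda\le\lambda\mu$, the reference frequency $\t\lambda^{1/2}$ not exceeding the wave frequency $(\lambda\mu)^{1/2}|k|$ that saturates the available $H^s$ regularity, while the largeness $\lambda\mu\ge\nu_\star$ supplies enough scale separation to run the hierarchy when $m<s$. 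Making this estimate uniform across the whole range $1\le\t\lambda\le\lambda\mu$, and in particular at the strong end $\t\lambda=1$, by combining the skew structure with the equations, is the crux of the argument.

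Collecting the level-$j$ estimates yields a closed differential inequality of the form $\tfrac{d}{dt}\E\le C(h_\star^{-1})\,P(\E)\,\E$ with $P$ a fixed polynomial and $C$ independent of $(\lambda,\mu)\in\mathcal{S}_\nu$; see~\eqref{not-Snu}. A standard bootstrap then closes the proof: $\E$ controls $\norm{V}_{H^s}$ (the $j=0$ term, of weight one) and hence $\norm{V}_{W^{1,\infty}}$ by $s>1+d/2$, so for suitable $\tau,C_0$ the bound $\E(t)\le C_0 M_0$ is maintained on $[0,(M_0\tau)^{-1}]$, where $h\ge h_\star/2$ persists as well, and the $W^{1,\infty}$ blow-up criterion of Theorem~\ref{T.WP0} forces $T^\star>(M_0\tau)^{-1}$. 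For the refinement $m=s$ there are no discounted derivatives, so the weights can no longer tame the order-one coupling and the estimate must rest entirely on skew-symmetry and equation-absorption; this requires $\Sigma(V)$ and the nonlinear singular potential $(\mu\lambda)^{1/2}\iota+\iota^2/h$ to stay uniformly coercive at top order. That coercivity is guaranteed precisely by the pointwise condition $(1-\delta_\star)(\lambda\mu)^{1/2}\ge\max\{|(\kappa h)|,\tfrac12|(\iota h^{-1})|\}$ at $t=0$ (propagated for short time), which keeps the coefficient of $\nabla\iota$ in the second equation of~\eqref{FG-adim2} positive and bounded away from degeneracy, thereby replacing the crude largeness assumption $\lambda\mu\ge\nu_\star$.
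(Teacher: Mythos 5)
Your overall skeleton (weighted norms, symmetric structure, skew-symmetry, Gronwall plus a continuity argument on the blow-up criterion) is the right Schochet-type framework, but the central claim --- a closed hierarchy of energy estimates on $\partial_t^j V$ in $H^{s-j}$ for \emph{all} $0\le j\le s$ --- is precisely what fails here, and the paper's proof is organized around this failure. Note first that after the change of variables~\eqref{change} the singular part of~\eqref{FG-sym} is $\lambda^{1/2}\J V$ with $\J$ a \emph{constant-coefficient} skew-symmetric operator: it commutes with every $\partial_t^j\partial^\k$ and drops out of every energy identity, so no ``singular commutators'' survive at all (your picture of variable-coefficient singular couplings to be tamed by skew-symmetry or equation-absorption does not correspond to this formulation; the $\mu^{-1/2}$ factors only appear in the quadratic source $G(V)$ and are absorbed using the last two equations of~\eqref{FG-adim2}). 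The true obstruction is the non-singular commutator $[\partial_t^j\partial^\k,\S_t]\partial_t V$ with $j\ge m$, $|\k|\ge 1$: it contains terms $(\partial^{\k_1}\S_t)\,\partial_t^{j+1}\partial^{\k_2}V$ with $|\k_1|\ge1$, i.e.\ \emph{no} time derivative falls on the coefficient. The energy carries weight $\tilde\lambda^{m-j}$, the test function $\partial_t^j\partial^\k V$ costs $\tilde\lambda^{(j-m)/2}$ and the factor $\partial_t^{j+1}\partial^{\k_2}V$ costs $\tilde\lambda^{(j+1-m)/2}$, leaving an uncompensated factor $\tilde\lambda^{1/2}$; your differential inequality then reads $\frac{\dd}{\dd t}\E^2\lesssim\tilde\lambda^{1/2}\,C(\E)\E^3$, giving a lifespan shrinking like $\tilde\lambda^{-1/2}$ rather than the uniform timescale claimed. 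This is exactly why Proposition~\ref{P.energy-estimate} performs energy estimates only on the restricted family~\eqref{norm-1}: all spatial derivatives of $\partial_t^jV$ for $j\le m-1$, and \emph{pure} time derivatives ($\k=0$) for $j\ge m$, where the commutator $[\partial_t^j,\S_t]$ necessarily places at least one time derivative on $\S_t$ and the weight bookkeeping closes.

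What your proposal is missing is the second, non-dynamical mechanism that recovers the mixed derivatives~\eqref{norm-2}: Proposition~\ref{P.bootstrap}. One splits $V$ along the kernel and range of $\J$ via the projections $\Pi^\r,\Pi^\s$; on the range, $\J$ is invertible from $H^{k}$ onto $H^{k-1}$ with norm $C_\J\mu^{-1/2}$ (Lemma~\ref{L.L}), so the time-differentiated equation yields $\partial_t^jV^\s$ \emph{statically}, the cost $\mu^{-1/2}$ combining with the prefactor $\lambda^{-1/2}$ into $(\lambda\mu)^{-1/2}$, which must dominate the weight gap $\tilde\lambda^{1/2}$; on the kernel, one tests the projected equation against $\Pi^\r\partial^\k\partial_t^jV$, where the singular term vanishes identically. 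This is the actual origin of the constraints $\tilde\lambda\le\lambda\mu$ and $\lambda\mu\ge\nu_\star$ (the latter lets the resulting self-improving inequality close), which you instead attribute to a weight budget inside energy commutators. Your reading of the $m=s$ refinement is likewise off: when $m=s$ the family~\eqref{norm-2} is empty (there are no mixed derivatives with $j\ge m$ and $|\k|\ge1$), so the energy estimates close by themselves and the bootstrap is not needed; the pointwise condition on $(\kappa h)\id{t=0}$ and $(\iota h^{-1})\id{t=0}$ serves only to guarantee positive-definiteness of $\S_t$ (Proposition~\ref{P.equiv}) --- note in passing that $\S_t$ depends on $\iota$ and $\kappa$, not only on $h$ --- and is propagated on the quasilinear timescale, rather than being a coercivity substitute for ``taming the order-one coupling.''
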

It is important to notice that the above result holds with any $\tilde\lambda \in[1,\lambda\mu]$ but not with $\tilde\lambda=\lambda$ uniformly with respect to $\mu$ small. If it were the case, then the initial assumption on the high-order time derivatives of the unknown would be irrelevant as, by using the system of equations~\eqref{FG-adim2} and product estimates (see Lemma~\ref{L.products} below), we can estimate high-order time derivatives of $V$ from lower-order time derivatives, with a cost of powers of $\lambda^{1/2}$:
\[ \sum_{j=m+1}^{s} \lambda^{\frac{m-j}2}\norm{\partial_t^j V }_{H^{s-j}} \leq C\Big(h_\star,\nu,\sum_{j=0}^{m} \norm{\partial_t^j V}_{H^{s-j}}\Big)\times \sum_{j=0}^{m} \norm{\partial_t^j V}_{H^{s-j}}.\]
In particular, in the strong dispersion regime ($\mu\approx 1$), the explicit condition
\[\norm{V\id{t=0}}_{H^s}+\lambda^{1/2}\norm{\iota\id{t=0}}_{H^{s-1}}+\lambda^{1/2}\norm{\kappa\id{t=0}+\mu^{1/2}\nabla\cdot\u\id{t=0}}_{H^{s-1}}\leq M_0\]
is sufficient (applying Theorem~\ref{T.WP-mr} with $\tilde\lambda=\lambda\mu\approx\lambda$ and $m=1$) to guarantee the existence and uniform control of the corresponding solution ---but not its time derivatives--- on a time interval uniform with respect to $\lambda$ sufficiently large.
In the weak dispersion or shallow-water regime ($\mu\ll 1$), Theorem~\ref{T.WP-mr} relies on a strong constraint on the initial behavior of the solution, and it is natural to ask whether it is possible to provide initial data such that the corresponding solution satisfies the desired estimates for arbitrarily large $\lambda$ and small $\mu$, all the other parameters being fixed. We answer positively with the following result.

\begin{Theorem}\label{T.Prep}
Let $s,m\in\NN$, $s>d/2+1$, $s\geq m+1$ and $h_\star,M_0,\nu>0$. Then there exists $C_m,C_m'>0$ such that for any $(\lambda,\mu)\in\mathcal{S}_\nu$ and $\zeta_0,\u_0\in H^s(\DD^d)$ such that $h_0=1+\zeta_0\geq h_\star>0$ and
\[\norm{\zeta_0}_{H^s}+\norm{\u_0}_{H^s}\leq M_0,\]
the following holds. There exists $c^{(j)}\in H^s(\DD^d)$ for $j\in\{1,\cdots,m\}$ such that the unique strong solution to~\eqref{FG-adim} with initial data $U^{(m)}\id{t=0}=(\zeta_0,\u_0,\eta_0^{(m)},w_0^{(m)})$ where
\begin{equation}\label{def-id}
 w_0^{(m)}=\sum_{\substack{j\text{ odd} \\ 1\leq j\leq m}} \lambda^{-(j-1)/2} c^{(j)} \quad \text{ and }\quad  \eta_0^{(m)}=h_0+\sum_{\substack{j\text{ even}\\  2\leq j\leq m }} \lambda^{-j/2} c^{(j)} 
 \end{equation}
satisfies
\begin{equation}\label{est-prep-U}
\sum_{j=0}^{m+1}\norm{\partial_t^jU^{(m)}}_{H^{s-j}} (0)+\lambda \sum_{j=0}^m\norm{ \partial_t^j(\eta^{(m)}-h^{(m)})}_{H^{s-j}}(0)\leq C_m\, M_0.
\end{equation}
Moreover, we have for any $j\in\{1,\cdots,m\}$
\begin{equation}\label{est-prep-c}
\norm{c^{(j)}}_{H^{s-m}}+\mu^{\lfloor j/2\rfloor}\norm{c^{(j)}}_{H^{s}}\leq  C_m'\, M_0.
\end{equation}
We can choose $c^{(1)}=-h_0\nabla\cdot\u_0 $ and $c^{(2)}$ the unique solution to
\[\mathfrak{t}[h_0] c^{(2)}=h_0^3\left(\u_0\cdot \nabla (\nabla\cdot\u_0)-(\nabla\cdot\u_0)^2-\Delta \zeta_0-\nabla\cdot((\u_0\cdot\nabla)\u_0)\right)\]
where we define
\[\mathfrak{t}[h] \psi\eqdef \psi-\frac\mu3 h^3\nabla\cdot\left(h^{-1}\nabla \psi \right).\]
\end{Theorem}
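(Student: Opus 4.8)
Since every norm in~\eqref{est-prep-U} and~\eqref{est-prep-c} is evaluated at $t=0$, and since the time derivatives $\partial_t^jU\id{0}$ are determined algebraically by differentiating~\eqref{FG-adim} in time and evaluating at the origin, the statement is really a construction at the single time $t=0$: I would choose the augmented initial data $(\eta_0,w_0)$ so that the contributions to $\partial_t^jU\id{0}$ that are a priori of size $\lambda$ or $\lambda^{1/2}$ cancel, leaving $\lambda$-uniform bounds. Existence of a strong solution giving meaning to these derivatives on a (possibly tiny) interval is guaranteed by Theorem~\ref{T.WP0}. The mechanism is the familiar one of slaving the augmented unknowns to the hydrodynamic data $(\zeta_0,\u_0)$, \ie placing the data on the formal slow manifold of the fast oscillation carried by $(\iota,h\kappa)$ in~\eqref{FG-adim2}, and the $c^{(j)}$ are precisely the coefficients of this slow-manifold expansion in powers of $\lambda^{-1/2}$.

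To generate the recursion I would use the two exact scalar identities behind the last two equations of~\eqref{FG-adim}. Writing $\L=\partial_t+\u\cdot\nabla$ and $\theta=\eta-h$, these equations read $\L\eta=w$ and $\L w=-\tfrac{\lambda}{h^2}\theta$, hence $\lambda\theta=-h^2\L^2\eta$. Demanding $\partial_t^{\,l}\theta\id{0}=\O(\lambda^{-1})$ for $l=1,\dots,m$ then determines the corrections hierarchically. At $l=1$, expanding $\partial_t\theta\id{0}=w_0+h_0\nabla\cdot\u_0-\u_0\cdot\nabla\theta_0$ forces the leading part of $w_0$ to be $-h_0\nabla\cdot\u_0$, giving $c^{(1)}=-h_0\nabla\cdot\u_0$. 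At $l=2$, computing $\partial_t^2\theta\id{0}$ brings in both the term $-h_0^{-2}\lambda\theta_0$ from $\L w$ and the dispersive term $\tfrac{\lambda\mu}{3h}\nabla(\tfrac\eta h\theta)$ from the momentum equation, so that requiring $\partial_t^2\theta\id{0}=\O(\lambda^{-1})$ becomes an elliptic equation for the leading part $c^{(2)}$ of $\lambda\theta_0$; substituting $c^{(1)}$ and collecting terms reproduces exactly $\mathfrak t[h_0]c^{(2)}=h_0^3(\cdots)$ with the stated right-hand side. The same pattern continues: for odd $l$ the correction $c^{(l)}$ is given by an explicit differential expression in $c^{(1)},\dots,c^{(l-1)}$ and the data, while for even $l$ it requires one inversion of $\mathfrak t[h_0]=\Id-\tfrac\mu3h_0^3\nabla\cdot(h_0^{-1}\nabla\,\cdot\,)$, which is boundedly invertible on each Sobolev space for $h_0\geq h_\star$.

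For the estimates I would run a tame induction on $l$ using Lemma~\ref{L.products}, tracking the two norms in~\eqref{est-prep-c} separately. The low-regularity norm $\|c^{(l)}\|_{H^{s-m}}$ is controlled $\mu$-uniformly by simply conceding the derivatives lost along the way and using that $\mathfrak t[h_0]^{-1}$ is bounded $H^\sigma\to H^\sigma$ uniformly in $\mu\in(0,1]$. The full-regularity norm instead exploits the smoothing of the elliptic inverse, $\|\mathfrak t[h_0]^{-1}\|_{H^\sigma\to H^{\sigma+2}}\lesssim\mu^{-1}$: since exactly the even steps $l\leq j$ use such an inversion and there are $\lfloor j/2\rfloor$ of them, the powers of $\mu^{-1}$ accumulate to $\mu^{-\lfloor j/2\rfloor}$, which is~\eqref{est-prep-c}. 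Feeding these bounds back into the algebraic expressions for $\partial_t^jU\id{0}$ and $\lambda\,\partial_t^j\theta\id{0}$, and invoking the cancellations built into the construction, then yields~\eqref{est-prep-U}, the asymmetry between $j\leq m+1$ (unweighted) and $j\leq m$ ($\lambda$-weighted) reflecting that the top time derivative only needs to be bounded, not slaved.

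The hard part, as always in such problems, will be the simultaneous control of regularity and of powers of $\mu^{-1}$ across the coupled hierarchy. One must argue that every factor of $\mu^{-1}$ produced by an elliptic inversion is genuinely tied to a gain of two spatial derivatives that can be forgone at low regularity, so that the $H^{s-m}$ bounds stay honestly $\mu$-uniform while the $H^s$ bounds grow no faster than $\mu^{-\lfloor j/2\rfloor}$; in particular the interplay between the derivative losses of the transport (odd) steps and the derivative gains of the dispersive (even) steps must be accounted for carefully, and it is here that the precise Sobolev budget of the construction, together with the well-preparedness encoded in~\eqref{def-id}, has to be checked against the claimed spaces.
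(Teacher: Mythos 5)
Your overall strategy is the same as the paper's: impose $\partial_t^l(\eta-h)\id{t=0}=\O(\lambda^{-1})$ for $l=1,\dots,m$, observe that the singular term $-\lambda h^{-2}(\eta-h)$ from the $w$-equation and the dispersive $\lambda\mu$-term from the momentum equation combine into the elliptic operator $\mathfrak{t}[h]$ (the quadratic-in-$(\eta-h)$ terms carrying $\lambda\mu$ being harmless), and solve the resulting hierarchy by induction using Lemma~\ref{L.c} and Lemma~\ref{L.products}; your $c^{(1)}$ and $c^{(2)}$ are exactly the paper's. The genuine gap is your structural claim that the elliptic inversion occurs only at even steps (``for odd $l$ the correction $c^{(l)}$ is given by an explicit differential expression \dots while for even $l$ it requires one inversion of $\mathfrak{t}[h_0]$''). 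This is already false at $l=3$. Differentiating~\eqref{dt2-(n-h)} in time and evaluating at $t=0$, using~\eqref{dt-(n-h)} and the ansatz~\eqref{def-id}, gives
\[
\partial_t^3(\eta-h)\id{t=0}=A_3-h_0^{-2}\mathfrak{t}[h_0]\big(c^{(3)}-\u_0\cdot\nabla c^{(2)}\big)+\O(\lambda^{-1}),
\]
where $A_3\eqdef\big(\partial_t(\mathfrak{r}[U]+\lambda\mu\,\mathfrak{s}[U,\eta-h])-\lambda[\partial_t,h^{-2}\mathfrak{t}[h]](\eta-h)\big)\id{t=0}$ is an $\O(1)$ quantity (for instance the commutator term equals an explicit second-order operator, built from $h_0$ and $\partial_t h\id{t=0}$, applied to $c^{(2)}$, up to $\O(\lambda^{-1})$) and is \emph{not} of the form $h_0^{-2}\mathfrak{t}[h_0](\text{known})$ in general. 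Hence enforcing $\partial_t^3(\eta-h)\id{t=0}=\O(\lambda^{-1})$ forces $c^{(3)}=\u_0\cdot\nabla c^{(2)}+\mathfrak{t}[h_0]^{-1}(h_0^2A_3)$: one inversion at an odd step. With your inversion-free choice of $c^{(3)}$ one only gets $\partial_t^3(\eta-h)\id{t=0}=\O(1)$, so the $\lambda$-weighted half of~\eqref{est-prep-U} fails for every $m\geq 3$. This is exactly what the paper's definition~\eqref{def-cj} encodes: $c^{(m)}$ is obtained by inverting $(-h_0^{-2}\mathfrak{t}[h_0])^{\lfloor m/2\rfloor}$, i.e.\ $\lfloor m/2\rfloor$ \emph{nested} inversions at step $m$ for both parities, because the singular operator enters $\partial_t^m(\eta-h)$ as the power $(-\lambda h^{-2}\mathfrak{t}[h])^{\lfloor m/2\rfloor}$, cf.~\eqref{dtm-(n-h)}.

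As a consequence, your $\mu$-bookkeeping lands on the correct exponent $\lfloor j/2\rfloor$ by an incorrect mechanism: it is not ``one factor $\mu^{-1}$ per even step'', but rather that the right-hand side defining $c^{(j)}$ is an order-$j$ differential expression of the data (hence lives in $H^{s-j}$), and each of the $\lfloor j/2\rfloor$ nested inversions trades one factor $\mu^{-1}$ for a gain of two derivatives (Lemma~\ref{L.c}), which is precisely what balances~\eqref{est-prep-c}. Moreover, the part you defer as ``to be checked'' conceals a step without which the induction cannot close: when $(-\lambda h_0^{-2}\mathfrak{t}[h_0])^{\lfloor m/2\rfloor}$ hits the lower-order corrections $c^{(k)}$, $k<m$, contained in $\eta_0^{(m)}-h_0$ and $w_0^{(m)}$, it produces terms growing like positive powers of $\lambda$, and one must show these cancel against corresponding contributions of $\mathfrak{S}_{m-2}(U_0^{(m-1)})$ evaluated at $t=0$. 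The paper organizes this cancellation through the correction $s^{(m-2)}$ in~\eqref{def-cj} together with the difference estimates on $\mathfrak{S}_{m-4}(U_0^{(m-1)})-\mathfrak{S}_{m-4}(U_0^{(m-3)})$; nothing in your sketch plays this role.
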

\begin{Remark}
The expression for $c^{(2)}$ emerges when solving 
\[(h^2\ddot{\eta}-h^2\ddot{h})\id{t=0} =-\lambda(\eta-h)\id{t=0}-(h^2\ddot{h})\id{t=0}=\O(\lambda^{-1}).\]
The operator $\mathfrak{t}[h]$ is one-to-one and onto (see Lemma~\ref{L.c} below) if $\inf h>0$ and is in some sense conjugate to $\mathfrak{T}$ defined above, as 
\[\mathfrak{T}[h](h^{-1}\nabla \psi)=h^{-1}\nabla (\mathfrak{t}[h]\psi)\quad \text{ and } \quad h^3\nabla\cdot (\mathfrak{T}[h]\u)=\mathfrak{t}[h](h^3\nabla\cdot\u).\]
\end{Remark}
\begin{Remark}
A direct application of Theorems~\ref{T.WP-mr} and Theorem~\ref{T.Prep} shows that for any sufficiently regular initial data $(\zeta_0,\u_0)$ satisfying the non-cavitation assumption, one may associate a solution to~\eqref{FG-adim} satisfying the estimates of Theorem~\ref{T.consistency} uniformly with respect to $\mu$ possibly small and $\lambda$ sufficiently large, over the quasilinear time-scale (\ie inversely proportional to the size of the initial data).
Using energy estimates on the quasilinearized Green-Naghdi system (see~\cite{Li06,Alvarez-SamaniegoLannes08a,FujiwaraIguchi15,DucheneIsrawi}), we deduce that the difference between such solution and the exact solution to the Green-Naghdi system~\eqref{GN-adim} with the same initial data is of size $\O(\lambda^{-1}\mu t)$ on the quasilinear time-scale. This should be compared with the results of the previously mentioned works (and references therein) showing that the solution to the Green-Naghdi system is at a distance $\O(\mu^2 t)$ to the solution of the full water-waves system with corresponding initial data on the same timescale. Hence the Favrie-Gavrilyuk system produces as precise approximate solutions for long gravity waves as the Green-Naghdi system itself as soon as $\lambda\gtrsim \mu^{-1}$ and the initial data for $(\eta,w)$ is suitably chosen.
\end{Remark}

\subsection{Outline}

The remainder of this work is organized as follows. In Section~\ref{S.Strategy} we describe and comments our results and strategy together with relevant references in the literature.
In Section~\ref{S.notations}, we describe our notations. Section~\ref{S.technical} contains technical tools such as product estimates in Sobolev spaces and an elliptic estimate on the operator $\mathfrak{t}[h]$. We show in Section~\ref{S.Hyp} that the Favrie-Gavrilyuk system is hyperbolic under the usual non-cavitation assumption. We exhibit in Section~\ref{S.Sym} the symmetric structure of the system upon which our results are based.
Section~\ref{S.WP} contains the proof of Theorem~\ref{T.WP-mr}. Section~\ref{S.Prep} contains the proof of Theorem~\ref{T.Prep}. Section~\ref{S.conclusion} is dedicated to a summary and concluding remarks.

\subsection{Strategy}\label{S.Strategy}

As aforementioned, the main tool for proving the above results is energy estimates, which should hold uniformly with respect to the parameters $\lambda,\mu\in \mathcal{S}_\nu$. In order to obtain these estimates, we make use of a symmetric structure which is fairly easily deduced from the formulation~\eqref{FG-adim2}. As a matter of fact, we show in Section~\ref{S.Sym} below that the system can rewritten (when $d=2$) as
\[
 \S_t(V)\partial_t V+\S_x(V)\partial_x V+\S_y(V)\partial_y V=\lambda^{1/2}\J V+G(V),
\]
where $\S_t,\S_x,\S_y$ are smooth functions of $V$ with values into symmetric matrices, $\J$ is a skew-symmetric differential operator, and $G$ is a smooth function. Moreover $\S_t$ is positive definite in a hyperbolicity domain containing a neighborhood of the origin in $L^\infty(\DD^d)^{d+3}$.

We are obviously looking at a {\em singular limit} problem. Such problems, and in particular incompressible or low Mach number limits in the context of fluid mechanics, have a very rich history, which we shall not recall. We will only let the interested reader refer to, \eg, \cite{Gallagher05,Alazard08,Schochet05} for comprehensive reviews. Due to the non-trivial symmetrizer in front of the time derivative, the linearized system does not appear to be uniformly well-posed in Sobolev spaces as $\lambda\to \infty$ since small perturbations of the initial data might cause large changes in solutions. This is a noteworthy feature of the incompressible limit of the non-isentropic Euler equations, as studied in particular in~\cite{MetivierSchochet01}. However, our problem is different in nature as we do not wish to deal with large oscillations in time but rather aim at discarding them as spurious products of the approximation procedure. Hence we willingly restrict our study to {\em well-prepared initial data}, and as such our work is more directly related to pioneering works of Browning and Kreiss~\cite{BrowningKreiss82}, Klainerman and Majda~\cite{KlainermanMajda81}, and Schochet~\cite{Schochet86a}.
  In fact our proof of Theorem~\ref{T.WP-mr} closely follows the one of~\cite{Schochet86a}; while the proof of Theorem~\ref{T.Prep} is strongly inspired by~\cite{BrowningKreiss82}. However in both cases the proof requires significant adaptations in order to take into account the fact that the singular operator, {\em $\J$, is not homogeneous of order one}. 

The most serious novel difficulty stems from the fact that the contribution from order-zero terms in $\J$ are less well-behaved than order-one contributions, and that the latter are multiplied by a vanishing prefactor as $\mu\to 0$. This is the reason for the shortcoming described below Theorem~\ref{T.WP-mr}. We would like to explain now this discrepancy with the more standard setting ---studied in the previously mentioned references--- where $\J$ is homogeneous of order one. A toy model for the latter situation could be the following:
\[\partial_t u=\frac1\epsilon\, h \partial_x u\quad ; \quad \partial_t h=0.\]
Here $u$ is the singular variable while $h$ is a regular variable, given and independent of time.
The problem is reduced to a linear problem with variable coefficients, which is readily solvable by the methods of characteristics if we assume for instance that $h,u$ are initially regular and for any $x\in\RR$, $0<h_\star\leq h(x)\leq h^\star<\infty$. We see that variations of size $\delta$ in $h$ produce variations of size $1$ on $u$ at time $t=\epsilon/\delta$. However, the solution and its space-derivatives remain controlled for all times, uniformly with respect to $\epsilon$ small. 
This behavior is not shared for the toy model corresponding to $\J$ homogeneous of order zero, namely
\[\partial_t u=i\frac1\epsilon\,  h u \quad ; \quad \partial_t h=0.\]
The problem is now an ordinary differential equation in time where the space variable is a parameter. The solution $u(t,x)=u_0(x)\exp(i t h(x)/\epsilon)$ strongly oscillates with a different rate as $h(x)$ takes different values. Hence for positive times, the solution exhibits small scale oscillations, and space-derivatives are not uniformly controlled with respect to the parameter $\epsilon$ small. If variations of $h$ are of size $\delta$, it is necessary to prepare the initial data $u\id{t=0}=\O(\epsilon^m)$ in order to control $m$ space derivatives of the solution at time $t=1/\delta$. Our situation is roughly speaking a combination of the above where the size of $\mu$ measures the relative strength of the two influences. Based on the necessary properties satisfied by $\J$ (in particular Lemma~\ref{L.L} below) a toy model could be
\[\partial_t u=i\frac1\epsilon\, h \sqrt{1-\mu\partial_x^2}\, u \quad ; \quad \partial_t h=0.\]
Consistently with Theorem~\ref{T.WP-mr} ---and following the lines of its proof--- the assumption $u\id{t=0}=\O(\mu^{m/2})$ is sufficient to control $m$ space derivatives of the solution at time $t=1/\delta$, uniformly with respect to $\epsilon$ small.

\section{Preliminaries}

\subsection{Notations}\label{S.notations}

The parameter $d\in\{1,2\}$ denotes the horizontal space dimension, $X\in\DD^d$, where $\DD=\RR$ or $\DD=\TT$. If $d=2$, then we denote $X=(x,y)$. We sometimes assume for simplicity that $d=2$, the setting $d=1$ being recovered after straightforward simplifications. $\Id_{d}$ is the $d\times d$ identity matrix while ${\sf 0}_{d_1,d_2}$ is the $d_1\times d_2$ null matrix.

The notation $a\lesssim b$ means that 
 $a\leq C_0\ b$, where $C_0$ is a nonnegative constant whose exact expression is of no importance.
 We denote by $C(\lambda_1, \lambda_2,\dots)$ a nonnegative constant depending on the parameters
 $\lambda_1$, $\lambda_2$,\dots and whose dependence on the $\lambda_j$ is always assumed to be nondecreasing.

We use standard notations for functional spaces. $L^2(\DD^d)$ is the standard Hilbert space of square-integrable functions, associated with the inner-product  $\big(f_1,f_2\big)_{L^2}=\int_{\DD^d}f_1(x)f_2(x) \dd x$ and the norm $\norm{ f}_{L^2}=\big(\int_{\DD^d}\vert f(x)\vert^2 \dd x\big)^{1/2}$. The space $L^\infty(\DD^d)$ consists of all essentially bounded, Lebesgue-measurable functions
 $f$ with the norm
$\norm{f}_{L^\infty}= {\rm ess\,sup}_{x\in\DD^d} \vert f(x)\vert<\infty
$. We endow the space $W^{1,\infty}(\DD^d)=\{f, \mbox{ s.t. }   f\in L^{\infty}(\DD^d),\ \nabla f\in L^{\infty}(\DD^d)^d\}$ with its canonical norm. For any real constant $s\in\RR$, $H^s(\DD^d)$ denotes the Sobolev space of all tempered distributions $f$ with finite norm $\norm{ f}_{H^s}=\norm{ (1-\partial_x^2)^{s/2} f}_{L^2}$. For $j\in\NN$, $I$ a real interval and $X$ a normed space, $C^j(I;X)$ denotes the space of $X$-valued continuous functions on $[0,T)$ with continuous derivatives up to the order $j$. All these norms extend to multidimensional vector-valued functions by summing the contributions of all scalar components.

Additionally, we introduce non-standard norms, denoted with double-bars: $\Norm{\cdot}_{s,m,\tilde\lambda},\Norm{\cdot}_{s,m,\tilde\lambda,(1)},\Norm{\cdot}_{s,m,\tilde\lambda,(2)}$ defined in~\eqref{norm}--\eqref{norm-2} and $\Norm{\cdot}_{s,j}$ defined in~\eqref{norm-prep}.

When $\k=(k_x,k_y)\in\NN^d$ is a multi-index, $|\k|=k_x+k_y$ and $\partial^\k=\partial_x^{k_x}\partial_y^{k_y}$ when $d=2$ (otherwise $d=1$, $|\k|=k$ and $\partial^\k=\partial_X^k$). 
For $X,Y$ two closed linear operators (typically of differentiation and pointwise multiplication), we denote $[X,Y]\eqdef XY-YX $ the commutator whose domain is clear from the context.

\subsection{Technical tools}\label{S.technical}

We use mostly without reference the standard continuous Sobolev embedding $H^{s}(\DD^d) \subset L^\infty(\DD^d)$ for $s>d/2$ with
\[\norm{f}_{L^\infty}\lesssim \norm{f}_{H^s}.\]
The following product estimate is proved for instance in~\cite[Theorem~C.10]{Benzoni-GavageSerre07}.
\begin{Lemma}\label{L.product}
Let $f\in H^{s_1}(\DD^d)$ and $g\in H^{s_2}(\DD^d)$ and $s_1,s_2\geq s_0\geq 0 $ such that $s_1+s_2>s_0+d/2$. Then $fg\in H^{s_0}(\DD^d)$ and
\[\norm{fg}_{H^{s_0}}\lesssim \norm{f}_{H^{s_1}}\norm{g}_{H^{s_2}}.\]
\end{Lemma}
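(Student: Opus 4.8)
The plan is to prove this classical Sobolev multiplication estimate on the Fourier side, reducing everything to a single uniform bound on a convolution of Bessel-type weights, which is precisely where the condition $s_1+s_2>s_0+d/2$ is consumed. Throughout I write $\langle\xi\rangle=(1+|\xi|^2)^{1/2}$, so that $\norm{f}_{H^s}=\norm{\langle\cdot\rangle^s\hat f}_{L^2}$, and I treat $\DD=\RR$ first, the torus case being identical once integrals over $\RR^d$ are replaced by sums over $\mathbb{Z}^d$. First I would use $\widehat{fg}=\hat f*\hat g$ together with the elementary bound $\langle\xi\rangle^{s_0}\lesssim\langle\xi-\eta\rangle^{s_0}+\langle\eta\rangle^{s_0}$ (valid for $s_0\geq0$, since $|\xi|\leq|\xi-\eta|+|\eta|\leq 2\max(\langle\xi-\eta\rangle,\langle\eta\rangle)$) to split
\[
\langle\xi\rangle^{s_0}\,|\widehat{fg}(\xi)|\lesssim\int\langle\xi-\eta\rangle^{s_0}|\hat f(\xi-\eta)|\,|\hat g(\eta)|\,\dd\eta+\int\langle\eta\rangle^{s_0}|\hat f(\xi-\eta)|\,|\hat g(\eta)|\,\dd\eta.
\]
Because the hypotheses on $(s_1,s_2)$ are symmetric, it is enough to estimate the first integral, say $I(\xi)$; the second is handled identically after exchanging the roles of $f$ and $g$.

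Next I would factor out the $L^2$ data. Setting $u=\langle\cdot\rangle^{s_1}|\hat f|$ and $v=\langle\cdot\rangle^{s_2}|\hat g|$, so that $u,v\in L^2$ with $\norm{u}_{L^2}=\norm{f}_{H^{s_1}}$ and $\norm{v}_{L^2}=\norm{g}_{H^{s_2}}$, the first integral becomes
\[
I(\xi)=\int\langle\xi-\eta\rangle^{s_0-s_1}\langle\eta\rangle^{-s_2}\,u(\xi-\eta)\,v(\eta)\,\dd\eta,
\]
and applying Cauchy--Schwarz in $\eta$ separates the deterministic weight from the data,
\[
I(\xi)^2\leq J(\xi)\int|u(\xi-\eta)|^2|v(\eta)|^2\,\dd\eta,\qquad J(\xi)\eqdef\int\langle\xi-\eta\rangle^{-a}\langle\eta\rangle^{-b}\,\dd\eta,
\]
with $a=2(s_1-s_0)\geq0$ and $b=2s_2\geq0$ (here the hypothesis $s_1\geq s_0$ is used, and $s_2\geq s_0$ will be used for the symmetric term). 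Integrating in $\xi$ and using Fubini --- the $\xi$-integral of $\int|u(\xi-\eta)|^2|v(\eta)|^2\dd\eta$ is the $L^1$ norm of $|u|^2*|v|^2$, equal to $\norm{u}_{L^2}^2\norm{v}_{L^2}^2$ --- yields
\[
\norm{I}_{L^2}^2\leq\Big(\sup_{\xi}J(\xi)\Big)\,\norm{f}_{H^{s_1}}^2\,\norm{g}_{H^{s_2}}^2.
\]

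The decisive step, and the one I expect to be the main obstacle, is the uniform bound $\sup_\xi J(\xi)<\infty$: this is exactly where sharpness lives, because $a+b=2(s_1+s_2-s_0)>d$ is equivalent to the hypothesis $s_1+s_2>s_0+d/2$. I would isolate it as an elementary lemma stating that, for $a,b\geq0$ with $a+b>d$,
\[
\int_{\RR^d}\langle\xi-\eta\rangle^{-a}\langle\eta\rangle^{-b}\,\dd\eta\lesssim\langle\xi\rangle^{-\min(a,\,b,\,a+b-d)},
\]
which is in particular bounded uniformly in $\xi$. The proof splits $\RR^d$ into $\{|\eta|\leq|\xi|/2\}$ (where $\langle\xi-\eta\rangle\gtrsim\langle\xi\rangle$), $\{|\xi-\eta|\leq|\xi|/2\}$ (where $\langle\eta\rangle\gtrsim\langle\xi\rangle$), and the remaining region where both brackets are $\gtrsim\langle\xi\rangle$; on each piece convergence and the claimed decay follow from $a+b>d$ controlling the integrand at spatial infinity. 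The endpoint $a+b=d$ produces a logarithmic divergence, matching the failure of the multiplication estimate at $s_1+s_2=s_0+d/2$.

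Finally, combining the last two steps bounds $\norm{I}_{L^2}$, and the symmetric integral is estimated identically with $a'=2s_1\geq0$, $b'=2(s_2-s_0)\geq0$ (again $a'+b'>d$); adding the two gives $fg\in H^{s_0}$ with $\norm{fg}_{H^{s_0}}\lesssim\norm{f}_{H^{s_1}}\norm{g}_{H^{s_2}}$. For $\DD=\TT$ the only point to recheck is the discrete analogue $\sup_{k}\sum_{l\in\mathbb{Z}^d}\langle k-l\rangle^{-a}\langle l\rangle^{-b}<\infty$ under $a+b>d$, which follows from the same region decomposition by comparison with the corresponding integral.
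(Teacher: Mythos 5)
Your proof is correct, but there is nothing in the paper to compare it against: the paper does not prove this lemma at all, it simply quotes it with a citation to Benzoni-Gavage and Serre (Theorem~C.10). What you have written is therefore a self-contained replacement for that citation, by the classical Fourier-side argument: the Peter--Paul splitting $\langle\xi\rangle^{s_0}\lesssim\langle\xi-\eta\rangle^{s_0}+\langle\eta\rangle^{s_0}$, Cauchy--Schwarz to decouple the $L^2$ data from the deterministic weight convolution $J(\xi)$, the Fubini identity $\norm{|u|^2\ast|v|^2}_{L^1}=\norm{u}_{L^2}^2\norm{v}_{L^2}^2$, and finally $\sup_\xi J(\xi)<\infty$ under $a+b>d$. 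Each step is sound, the hypotheses $s_1,s_2\geq s_0\geq 0$ and $s_1+s_2>s_0+d/2$ are used exactly where you say they are, and your remark that the argument transfers verbatim to $\TT^d$ matters here, since the paper allows $\DD=\TT$. Compared with paraproduct or Littlewood--Paley proofs of such product estimates, your route is more elementary and makes visible that the strict inequality is consumed precisely by the convergence of the weight convolution, with the endpoint $s_1+s_2=s_0+d/2$ failing logarithmically.

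One small correction to your auxiliary lemma: the claimed decay $J(\xi)\lesssim\langle\xi\rangle^{-\min(a,\,b,\,a+b-d)}$ is false at the tie cases $a=d$ or $b=d$, where a logarithmic factor appears. For instance if $b=d$ and $0<a<d$, the region $\{|\eta|\leq|\xi|/2\}$, on which $\langle\xi-\eta\rangle\sim\langle\xi\rangle$, contributes $\sim\langle\xi\rangle^{-a}\log\langle\xi\rangle$, exceeding the claimed $\langle\xi\rangle^{-a}$. This does not damage your proof of the product estimate, because $a+b>d$ forces the exponent accompanying any such logarithm to be strictly positive, so $\sup_\xi J(\xi)<\infty$ still holds, and uniform boundedness is the only thing you use. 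But as stated the displayed inequality in your sub-lemma should either carry a factor $\log(2+|\xi|)$ in the tie cases or be weakened to the assertion $\sup_\xi J(\xi)<\infty$, which is all the main argument requires.
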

In particular, $H^s(\DD^d)$ is a Banach algebra as soon as $s>d/2$. We deduce by induction on the number of factors the following multilinear product estimate.
\begin{Lemma}\label{L.products}
Let $k\geq 2$ and $f_l\in H^{s_l}(\DD^d)$ for $l\in\{1,\dots,k\}$, with $s_l\geq s_0\geq 0$ and $\sum_{l=1}^k s_l>s_0+d/2$. Then $\prod_l f_l\in H^{s_0}(\DD^d)$ and
\[ \norm{\prod_l f_l}_{H^{s_0}}\lesssim \prod_l\norm{f_l}_{H^{s_l}}.\]
\end{Lemma}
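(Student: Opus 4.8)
The plan is to argue by induction on the number of factors $k$, using the bilinear estimate of Lemma~\ref{L.product} as the engine; the base case $k=2$ is precisely that lemma. Throughout I would freely use the monotonicity of the Sobolev scale on $\DD^d$, namely $\norm{f}_{H^\sigma}\leq \norm{f}_{H^{s}}$ whenever $\sigma\leq s$ (immediate from $\langle\xi\rangle^\sigma\leq\langle\xi\rangle^s$ on the Fourier side, valid on both $\RR^d$ and $\TT^d$). This lets me lower any of the exponents $s_l$ to an arbitrary value in $[s_0,s_l]$ at no cost on the right-hand side, so that it suffices to establish the estimate for a convenient reduced choice of the $s_l$ still satisfying the hypotheses.

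For the inductive step, assuming the statement for $k-1\geq 2$ factors, I would relabel the factors so that $s_1\leq s_2\leq\cdots\leq s_k$ and then collapse two of them into a single factor in order to apply the induction hypothesis to the remaining family of $k-1$ functions. Concretely, I would pick an intermediate exponent $\tilde s$ and apply Lemma~\ref{L.product} to two chosen factors $f_i,f_j$ to get $f_if_j\in H^{\tilde s}$ with $\norm{f_if_j}_{H^{\tilde s}}\lesssim \norm{f_i}_{H^{s_i}}\norm{f_j}_{H^{s_j}}$; this requires $s_0\leq\tilde s\leq\min(s_i,s_j)$ and $\tilde s<s_i+s_j-d/2$. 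I would then invoke the induction hypothesis on the family $\{f_l\}_{l\neq i,j}\cup\{f_if_j\}$, carrying the exponents $\{s_l\}_{l\neq i,j}\cup\{\tilde s\}$, which demands that every exponent stay $\geq s_0$ and that the new total $\tilde s+\sum_{l\neq i,j}s_l$ still exceed $s_0+d/2$. Chaining the two estimates with the multiplicativity of the bound yields the claim for $k$ factors.

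The delicate point, and the step I expect to be the main obstacle, is the bookkeeping of the Sobolev indices: selecting the pair $(i,j)$ and the exponent $\tilde s$ so that the bilinear step and the inductive hypothesis apply simultaneously. The natural case distinction is dictated by the threshold $d/2$. When a factor has regularity $>d/2$ it acts as a pointwise multiplier on $H^{s_0}$, since $\norm{g f}_{H^{s_0}}\lesssim\norm{g}_{H^{s_0}}\norm{f}_{H^{s}}$ for $s>d/2$ by a further application of Lemma~\ref{L.product}; such a factor can simply be peeled off while keeping the target regularity $s_0$ fixed, which disposes of all but the genuinely low-regularity factors. When instead the two smallest exponents satisfy $s_2\leq d/2$, the merged regularity is governed by $\tilde s<s_1+s_2-d/2$ rather than by $\min(s_1,s_2)$, and one must check that the admissible window $[s_0,\,s_1+s_2-d/2)$ is nonempty and that, after the reduction afforded by the monotonicity trick, the surviving total budget remains strictly above $s_0+d/2$. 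Verifying that these constraints can always be met, so that the pair of hypotheses $s_l\geq s_0$ and $\sum_l s_l>s_0+d/2$ propagates along the induction, is exactly where the care lies; everything else reduces to repeated use of Lemma~\ref{L.product} together with the elementary product rule for the $\lesssim$ constants.
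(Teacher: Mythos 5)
Your plan --- induction on the number of factors with Lemma~\ref{L.product} as the engine --- is exactly what the paper does (its entire proof is the sentence that the lemma is ``deduced by induction on the number of factors''), but your proposal stalls at precisely the point you yourself flag as delicate, and that point cannot be overcome: the hypotheses $s_l\geq s_0$ and $\sum_l s_l>s_0+d/2$ do \emph{not} propagate through the induction. When two factors with $s_i,s_j\leq d/2$ are merged, the merged exponent must satisfy $\tilde s<s_i+s_j-d/2$, so the total budget $\sum_l s_l$ drops by \emph{more} than $d/2$ at each such merge; worse, your admissible window $[s_0,\,s_i+s_j-d/2)$ is empty whenever $s_i+s_j\leq s_0+d/2$, which the hypotheses allow. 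Your peeling step has the same defect: removing a factor with $s_l>d/2$ removes more than $d/2$ from the budget, so the remaining $k-1$ factors need not satisfy $\sum_{l'\neq l}s_{l'}>s_0+d/2$, and the induction hypothesis cannot be invoked.

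This is not a bookkeeping issue that more care can fix: the statement as written is false for $k\geq 3$. Take $d=1$, $\DD=\RR$, $k=3$, $s_0=0$, $s_1=s_2=0$, $s_3=3/5$, with $f_1=f_2=|x|^{-1/4}\chi$ and $f_3=\chi$, where $\chi$ is a smooth compactly supported cutoff equal to $1$ near the origin. Then $f_1=f_2\in L^2=H^0$, $f_3\in H^{3/5}$, the hypotheses hold since $3/5>1/2=s_0+d/2$, yet $f_1f_2f_3=|x|^{-1/2}\chi^3\notin L^2$. In this example every route fails: merging $f_1,f_2$ gives the empty window $[0,-1/2)$, while merging $f_1,f_3$ gives $\tilde s<1/10$, after which $\tilde s+s_2>1/2$ is impossible. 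The induction does close, and the lemma is true, under stronger hypotheses --- the relevant one for every application in the paper being that \emph{at most one} factor has regularity $\leq d/2$ (then one peels off the high-regularity factors one at a time, each peel being legitimate because $s_l+s_0>s_0+d/2$); if several factors lie at or below $d/2$ one must instead pay $d/2$ per merge, which leads to conditions of the type $\sum_l s_l>s_0+(k-1)d/2$ together with admissibility of each intermediate merge. So before your case analysis can be completed, the hypothesis of the lemma itself must be strengthened; the verification you defer is genuinely impossible as stated.
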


We conclude with a technical result concerning the elliptic operator
\[\mathfrak{t}[h] :\psi\mapsto \psi-\frac\mu3 h^3\nabla\cdot\left(h^{-1}\nabla \psi \right).\]

\begin{Lemma}\label{L.c}
Let $s>1+d/2$ and $\zeta\in H^s(\DD^d)$ with be such that $1+\zeta\geq h_\star>0$. Then $\mathfrak{t}[h]:H^{1}\to H^{-1}$ is one-to-one and onto. Moreover, one has for any $\psi\in H^k(\DD^d)$ with $k\in\NN$ such that $k\leq s-1$,
\[\norm{\mathfrak{t}[h]^{-1}\psi}_{H^k}+\mu \norm{\mathfrak{t}[h]^{-1}\psi}_{H^{k+2}}\leq C(h_\star^{-1},\norm{\zeta}_{H^{s}})\norm{\psi}_{H^{k}}.\]
\end{Lemma}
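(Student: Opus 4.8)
The plan is to treat $\mathfrak{t}[h]$ by a variational argument, after first removing the non-self-adjoint $h^3$ prefactor. Since $h=1+\zeta\in H^s$ with $s>d/2$ and $h\geq h_\star>0$, both $h$ and $h^{-1}$ are bounded above and below by constants depending only on $h_\star^{-1}$ and $\norm{\zeta}_{H^s}$. Multiplying $\mathfrak{t}[h]u=f$ by $3h^{-3}$ turns it into $3h^{-3}u-\mu\nabla\cdot(h^{-1}\nabla u)=3h^{-3}f$, which is the Euler--Lagrange equation of the symmetric bilinear form $a(u,\phi)\eqdef\int_{\DD^d}3h^{-3}u\phi+\mu\int_{\DD^d}h^{-1}\nabla u\cdot\nabla\phi$ on $H^1(\DD^d)$. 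This form is continuous and (for fixed $\mu$) coercive because $h^{-3}$ and $h^{-1}$ are bounded from below, so Lax--Milgram yields for every datum in $H^{-1}$ a unique solution in $H^1$. As multiplication by $h^{-3}$, with inverse $h^3$, is a bounded bijection of $H^{-1}$, this shows that $\mathfrak{t}[h]\colon H^1\to H^{-1}$ is one-to-one and onto.

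For the quantitative estimate I would first handle $k=0$. Testing the symmetrized equation against $u$ and integrating by parts gives $\int 3h^{-3}u^2+\mu\int h^{-1}|\nabla u|^2=\int 3h^{-3}fu$; the two-sided bounds on $h$ and Young's inequality absorb the right-hand side and yield $\norm{u}_{L^2}+\mu^{1/2}\norm{\nabla u}_{L^2}\leq C\norm{f}_{L^2}$. To upgrade to an $H^2$ bound I would rewrite the equation as $\mu\Delta u=3h^{-2}(u-f)-\mu\,h\,\nabla(h^{-1})\cdot\nabla u$ and take $L^2$ norms: the first term is $\lesssim C(\norm{u}_{L^2}+\norm{f}_{L^2})\leq C\norm{f}_{L^2}$, while the second is $\leq C\mu\norm{\nabla u}_{L^2}=C\mu^{1/2}\big(\mu^{1/2}\norm{\nabla u}_{L^2}\big)\leq C\mu^{1/2}\norm{f}_{L^2}$. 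Combined with the elliptic inequality $\norm{u}_{H^2}\lesssim\norm{u}_{L^2}+\norm{\Delta u}_{L^2}$ valid on $\DD^d$, this closes $\mu\norm{u}_{H^2}\leq C\norm{f}_{L^2}$. The residual factors $\mu^{1/2}$ are harmless in the regime at hand where $\mu\leq\nu$, the constant being taken nondecreasing in an upper bound for $\mu$.

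The higher-order estimates for $1\leq k\leq s-1$ I would obtain by induction on $k$, applying $\partial^\alpha$ with $|\alpha|\leq k$ to the symmetrized equation and testing against $\partial^\alpha u$. The principal terms reproduce the energy identity above at the level of $\partial^\alpha u$, while the commutators $[\partial^\alpha,h^{-3}]u$ and $[\partial^\alpha,h^{-1}]\nabla u$ are of lower order and, by the product estimate of Lemma~\ref{L.products} together with $h,h^{-1}\in H^s$, are bounded by $C(h_\star^{-1},\norm{\zeta}_{H^s})$ times $\norm{u}_{H^{k-1}}+\mu^{1/2}\norm{\nabla u}_{H^{k-1}}$, hence absorbed by the induction hypothesis; this gives $\norm{u}_{H^k}+\mu^{1/2}\norm{\nabla u}_{H^k}\leq C\norm{f}_{H^k}$. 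Estimating the $H^k$ norm of $\mu\Delta u=3h^{-2}(u-f)-\mu h\nabla(h^{-1})\cdot\nabla u$ by the same product estimate and invoking $\norm{u}_{H^{k+2}}\lesssim\norm{u}_{H^k}+\norm{\Delta u}_{H^k}$ then yields $\mu\norm{u}_{H^{k+2}}\leq C\norm{f}_{H^k}$.

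The constraint $k\leq s-1$ is exactly what makes every commutator and product term close: the at most $k$ derivatives falling on the coefficients $h^{\pm1},h^{-2},h^{-3}$ land in Sobolev spaces of order $\geq s-k\geq 1>d/2$, so Lemma~\ref{L.products} applies with a constant depending only on $h_\star^{-1}$ and $\norm{\zeta}_{H^s}$ (using also $\nabla(h^{-1})=-h^{-2}\nabla h\in H^{s-1}\subset L^\infty$). I expect the main difficulty to be bookkeeping rather than conceptual: one must track the $\mu$-weights so that dispersion always enters as $\mu^{1/2}\nabla$ in the energy and as the explicit prefactor $\mu$ at top order, and verify that absorbing the commutators into the coercive part never costs a negative power of $\mu$. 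The $h^{-3}$ multiplication that restores self-adjointness is the structural point that keeps the whole argument uniform.
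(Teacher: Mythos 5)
Your proposal is correct and takes essentially the same route as the paper: symmetrization by $h^{-3}$ (the paper equivalently tests against $h^{-3}\varphi$), the $L^2$ energy bound with the $\mu^{1/2}$-weighted gradient, recovery of the $\mu\norm{\cdot}_{H^{k+2}}$ control from the rewritten Laplacian identity, and induction on $k$ with commutators $[\partial^\k,h^{-3}]$, $[\partial^\k,h^{-1}]$ handled by Lemma~\ref{L.products}. The only cosmetic differences are your explicit Lax--Milgram argument where the paper invokes standard elliptic theory, and your explicit remark about the residual $\mu^{1/2}$ factor, which the paper tracks equivalently through the constant $C(h_\star^{-1},\norm{h}_{L^\infty},\mu^{1/2}\norm{\nabla h}_{L^\infty})$.
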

\begin{proof}
The existence and uniqueness of $\varphi\in H^{k+2}(\DD^d)$ such that 
\begin{equation}\label{eq-c}
\mathfrak{t}[h]\varphi\eqdef \varphi-\frac{\mu h^3}3\nabla\cdot(h^{-1}\nabla\varphi)=\psi
\end{equation}
 follows from standard elliptic theory, and we focus on the estimates. Testing~\eqref{eq-c} against $h^{-3}\varphi$ yields
 \[\norm{\varphi}_{L^2}^2+\mu \norm{\nabla\varphi}_{L^2}^2\leq C(h_\star,\norm{h}_{L^\infty})\norm{\varphi}_{L^2}\norm{\psi}_{L^2}.\]
Using again~\eqref{eq-c}, we find
\[\mu\norm{\Delta\varphi}_{L^2} = \norm{\frac3{h^2}(\varphi-\psi)+\mu (h^{-1}\nabla h)\cdot\nabla\varphi}_{L^2}\leq C(h_\star^{-1},\norm{h}_{L^\infty},\mu^{1/2}\norm{\nabla h}_{L^\infty})\norm{\psi}_{L^2},\]
and the estimate is proved for $k=0$. For $1\leq k\leq s-1$, we differentiate~\eqref{eq-c} and find for any $\k$ such that $|\k|=k$,
\[h^{-3}\mathfrak{t}[h]\partial^\k\varphi=\partial^\k\psi-[\partial^\k,h^{-3}]\varphi+\frac\mu3 \nabla\cdot[\partial^\k,h^{-1}]\nabla\varphi.\]
Testing against $\partial^\k\varphi$, and using Lemma~\ref{L.products}, we have by induction on $k$
\[\norm{\varphi}_{H^k}+\mu^{1/2} \norm{\varphi}_{H^{k+1}}\leq C(h_\star^{-1},\norm{\zeta}_{H^{s}})\norm{\psi}_{H^{k}}.\]
and the result follows by using once again the identity and Lemma~\ref{L.products}.  
\end{proof}

\subsection{Hyperbolicity of the Favrie-Gavrilyuk system}\label{S.Hyp}

System~\eqref{FG-adim} is a quasilinear system of balance laws. It can be written under the matricial form (in dimension $d=2$) with $U\eqdef (\zeta,\u,\eta,w)$:
\[\partial_t U+\A_x(U)\partial_x U+\A_y(U)\partial_y U=F(U).\]
 Its principal symbol $\L\eqdef i\tau  +i\xi_x \A_x(U)+i\xi_y \A_y(U)$ is
 \[ \L=i \begin{pmatrix} 
 \Theta & h{\boldsymbol \xi}^\top & & \\
 \alpha {\boldsymbol \xi}& \Theta & \beta {\boldsymbol \xi} &  \\
 && \Theta &\\
 &&& \Theta
 \end{pmatrix}\]
where ${\boldsymbol \xi}=(\xi_x,\xi_y)^\top$, $\alpha=1+\mu\lambda\frac{\eta^2}{3h^3}$, $\beta=\frac{\mu\lambda}{3h}(1-\frac{2\eta}{h})$ and $\Theta=\tau +u_x\xi_x+u_y\xi_y$.
One immediately sees that $\Theta=0$ solves the characteristic equation, $\det L=0$, with multiplicity $d+1$ and corresponding eigenvectors
\[ (0,\xi_y,-\xi_x,0,0) \quad ; \quad (\beta,0,0,-\alpha,0) \quad ; \quad (0,0,0,0,1).\]
The first eigenvector corresponds to the evolution of the vorticity $\omega=\curl\u$ which is transported by the flow. The two other components are consistent with the fact that the phase velocity of the linearized Green-Naghdi system vanishes in the high-frequency limit (see~\cite{FavrieGavrilyuk17} for a comparative analysis of the dispersion relation of the Green-Naghdi and Favrie-Gavrilyuk systems).
There are two additional values of $\Theta$ solving $\det L=0$, namely $\Theta=\pm\sqrt{\alpha h}|{\boldsymbol \xi}|$ with corresponding eigenvectors
\[ (\mp\sqrt{h}|{\boldsymbol \xi}|,\sqrt{\alpha}\xi_x,\sqrt{\alpha}\xi_y,0,0).\]

Hence we see that the system is strongly hyperbolic as soon as one restricts to $U= (\zeta,\u,\eta,w)$ satisfying $\inf (1+\zeta)\geq h_\star>0$, as its principal symbol is smoothly diagonalizable. As a matter of fact the system is Friedrichs-symmetrizable, since one can exhibit a symmetrizer, $\S$, such that $\S\A_x$ and $\S\A_y$ are symmetric:
\[ \S\eqdef  \begin{pmatrix} 
 \alpha &     & \beta & \\
  &  h \Id_{d}    &  &  \\
\beta && \gamma &\\
 &&& 1
 \end{pmatrix}\]
where $\gamma$ is taken large enough in order to ensure that $\S$ is definite positive as soon as $\inf h\geq h_\star>0$.
From this, standard results yield Theorem~\ref{T.WP0}; see~\cite{Benzoni-GavageSerre07}.

\subsection{Symmetric structure of the Favrie-Gavrilyuk system}\label{S.Sym}

We can write the Favrie-Gavrilyuk system with variables $V=(\zeta,\u,\iota,\kappa)$, namely~\eqref{FG-adim2}, in a symmetric matricial form:
 \[\S_t(V)\big(\partial_t V+(\u\cdot\nabla )V\big)+\S_x(V)\partial_x V+\S_y(V)\partial_y V=\lambda^{1/2}\J V+G(V)\]
where
\[\S_t\eqdef  \begin{pmatrix} 
3\alpha \beta & 0 &  & \\
0  &  3h\beta\Id_{d} &  &  \\
 && h^{-1} &\frac{-\kappa h^2}{(\lambda\mu)^{1/2}}\\
&&\frac{-\kappa h^2}{(\lambda\mu)^{1/2}}& h^3
 \end{pmatrix}\]
 and
 \[\S_x\xi_x+\S_y\xi_y= \begin{pmatrix} 
  0 &3h\alpha\beta  {\bf \xi}^\top &  & \\
3h \alpha\beta  {\bf \xi}   &  {\sf 0}_{d,d} & \frac{\kappa^2h^2}{(\lambda\mu)^{1/2}} {\boldsymbol \xi}  &  \\
  &\frac{\kappa^2h^2}{(\lambda\mu)^{1/2}} {\boldsymbol \xi}^\top & 0& 0 \\
 &   & 0 & 0
  \end{pmatrix} ,\
  G(V)= \begin{pmatrix} 0\\
{\bf 0}\\
\mu^{-1/2}h^{-1}\kappa\iota \\
-\mu^{-1/2}h^3\kappa^2 \end{pmatrix}.\]
with (misusing notation with the preceding section) $\alpha=1+\frac{\iota^2}{3h^2}$, $\beta=\frac{1-\frac{\kappa^2 h^2}{\lambda\mu}}{1+\frac{2\iota}{(\lambda\mu)^{1/2} h}}$.

\section{Large time well-posedness}\label{S.WP}

In this section, we provide uniform energy estimates satisfied by well-prepared strong solutions of the Favrie-Gavrilyuk system~\eqref{FG-adim}, which yield the large time well-posedness result of Theorem~\ref{T.WP-mr}. In the spirit of~\cite{Schochet86a}, we define for $m,s\in\NN$, $1\leq m\leq s$, $\tilde \lambda\in(0,+\infty)$ and sufficiently regular functions $V$
\begin{align}
\label{norm}
\Norm{V}_{s,m,\tilde\lambda}^2&\eqdef \sum_{j=0}^m\norm{\partial_t^j V}_{H^{s-j}}^2+\sum_{j=m+1}^{s} \tilde\lambda^{m-j}\norm{\partial_t^j V }_{H^{s-j}}^2,\\
\label{norm-1}
\Norm{V}_{s,m,\tilde\lambda,(1)}^2&\eqdef \sum_{j=0}^{m-1}\sum_{|\k|=0}^{ s-j}\big(\S_t(V)\partial_t^j \partial^\k V,\partial_t^j \partial^\k V\big)_{L^2}\nonumber\\
&\qquad +\sum_{j=m}^s \tilde\lambda^{m-j} \big(\S_t(V)\partial_t^j V,\partial_t^j V\big)_{L^2},\\
\label{norm-2}
\Norm{V}_{s,m,\tilde\lambda,(2)}^2&\eqdef \sum_{j=m}^{s-1}\sum_{|\k|=1}^{s-j}  \tilde\lambda^{m-j} \norm{\partial_t^j \partial^\k V}_{L^2}^2.
\end{align}
By convention $\Norm{V}_{s,m,\tilde\lambda,(2)}=0$ if $m=s$.
Of course the notation in~\eqref{norm-1} is abusive as the right-hand side does not define a norm. However the existence of regular solutions provided by Theorem~\ref{T.WP0} allows to bypass the usual step of the linear Cauchy problem with variable coefficients and we consider directly the fully nonlinear problem. Below we consider $V=(\zeta,\u,\iota,\kappa)\in C^0([0,T];H^s(\DD^d)^{d+3})$ strong solution to~\eqref{FG-adim2}, hence we have $V\in\bigcap_{j=0}^s C^j([0,T];H^{s-j}(\DD^d)^{d+3}) $.

In this section, we fix $\lambda,\mu\in  (0,+\infty)$ and assume for simplicity that 
\[\lambda\geq 1 \quad ; \quad \mu\leq 1 \quad ; \quad \lambda\mu\geq 1.\]
Hence $(\lambda,\mu)\in\mathcal{S}_1$, recalling notation~\eqref{not-Snu}; the only change in the generally case $(\lambda,\mu)\in \mathcal{S}_\nu$ with $\nu>0$ is that all constants then depend on the parameter $\nu$.

Our results rely on the following estimates satisfied by strong solutions to~\eqref{FG-adim2} satisfying reasonable hyperbolicity conditions.
\begin{Proposition}\label{P.equiv}
Let $s\in\NN$ with $s>1+d/2$ and $h_\star,h^\star>0$, $\delta_\star\in(0,1)$.  There exists $C_1=C(h_\star^{-1},\delta_\star^{-1},h^\star)$ such that for any strong solution to~\eqref{FG-adim2}, $V=(\zeta,\u,\iota,\kappa)\in C^0([0,T];H^s(\DD^d)^{d+3})$ satisfying, uniformly on $[0,T]\times\DD^d$, $ h\eqdef 1+\zeta \in [h_\star,h^\star] $, 
\begin{equation}\label{eq-hyp}
 h|\kappa |\leq (1-\delta_\star) (\lambda\mu)^{1/2}\quad \text{ and } \quad 2 h^{-1}|\iota |\leq (1-\delta_\star) (\lambda\mu)^{1/2},
 \end{equation}
for any $m\in\NN$ such that $1\leq m\leq s$ and for any $\tilde\lambda\in(0,+\infty)$,  one has
\begin{equation}\label{est-equiv}
\frac1{C_1} \Norm{V}_{s,m,\tilde\lambda} \leq \Norm{V}_{s,m,\tilde\lambda,(1)}+\Norm{V}_{s,m,\tilde\lambda,(2)}\leq C_1 \Norm{V}_{s,m,\tilde\lambda}
\end{equation}
uniformly on $t\in[0,T]$.
\end{Proposition}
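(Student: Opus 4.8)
The plan is to reduce the norm equivalence~\eqref{est-equiv} to a pointwise spectral equivalence of the symmetrizer $\S_t(V)$ with the identity, and then to recover~\eqref{est-equiv} by bookkeeping of the sums defining the three norms. Concretely, the first step is to prove that there exists $C=C(h_\star^{-1},\delta_\star^{-1},h^\star)$ such that, for every $(t,x)\in[0,T]\times\DD^d$ and every $W\in\RR^{d+3}$,
\[
\frac1C\,|W|^2\ \leq\ \big(\S_t(V(t,x))\,W,W\big)_{\RR^{d+3}}\ \leq\ C\,|W|^2 .
\]
Integrating in $x$ then yields $\frac1C\norm{W}_{L^2}^2\leq\big(\S_t(V)W,W\big)_{L^2}\leq C\norm{W}_{L^2}^2$ for every $W\in L^2(\DD^d)^{d+3}$, which is the only way $\S_t$ enters the norm $\Norm{\cdot}_{s,m,\tilde\lambda,(1)}$ of~\eqref{norm-1}.

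The pointwise bound I would obtain block by block. The middle block $3h\beta\,\Id_d$ is controlled as soon as $\beta$ and $h$ are bounded above and below; the hypothesis $h\in[h_\star,h^\star]$ handles $h$, while the two hyperbolicity conditions~\eqref{eq-hyp} are exactly what keep $\beta$ away from $0$ and $\infty$: the first bounds the numerator $1-\tfrac{\kappa^2 h^2}{\lambda\mu}\in[\delta_\star(2-\delta_\star),1]$, the second bounds the denominator $1+\tfrac{2\iota}{(\lambda\mu)^{1/2}h}\in[\delta_\star,2-\delta_\star]$. For the symmetric $(\iota,\kappa)$-block
\[
M=\begin{pmatrix} h^{-1} & -\kappa h^2(\lambda\mu)^{-1/2}\\[2pt] -\kappa h^2(\lambda\mu)^{-1/2} & h^3\end{pmatrix},
\]
I would argue via its trace $h^{-1}+h^3$ and its determinant $h^2\big(1-\tfrac{\kappa^2 h^2}{\lambda\mu}\big)$: the first condition in~\eqref{eq-hyp} makes the determinant bounded below by $h_\star^2\,\delta_\star(2-\delta_\star)>0$, which prevents degeneration of the $\iota$–$\kappa$ coupling, and together with the trace bound this pins both eigenvalues of $M$ between positive constants.

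Granting the pointwise equivalence, the inequalities~\eqref{est-equiv} follow by reorganizing the sums. For $0\leq j\leq m-1$, the term $\sum_{|\k|=0}^{s-j}\big(\S_t\partial_t^j\partial^\k V,\partial_t^j\partial^\k V\big)_{L^2}$ in~\eqref{norm-1} is equivalent to $\norm{\partial_t^j V}_{H^{s-j}}^2$, matching the weight-one part of~\eqref{norm}. For $m\leq j\leq s$, the $\S_t$-term in~\eqref{norm-1} only controls the $|\k|=0$ contribution $\norm{\partial_t^j V}_{L^2}^2$, and the missing space derivatives $|\k|=1,\dots,s-j$ are supplied precisely by~\eqref{norm-2}; since $\norm{\partial_t^j V}_{H^{s-j}}^2=\sum_{|\k|=0}^{s-j}\norm{\partial_t^j\partial^\k V}_{L^2}^2$, adding the two reconstructs $\tilde\lambda^{m-j}\norm{\partial_t^j V}_{H^{s-j}}^2$ up to the constant $C$. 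The case $j=s$ is consistent because there are no space derivatives to add and $\Norm{\cdot}_{s,m,\tilde\lambda,(2)}$ stops at $j=s-1$. Summing over $j$ and taking square roots gives both inequalities.

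I expect the main obstacle to be the uniform —that is, independent of $(\lambda,\mu)$— two-sided pointwise control of $\S_t$, and more precisely the upper bound on the scalar entry $3\alpha\beta$. While the lower spectral bound (hence the first inequality of~\eqref{est-equiv}) follows readily since $\alpha\geq1$ and $\beta$ is bounded below, the upper bound is delicate: with $\alpha=1+\tfrac{\iota^2}{3h^2}$ and~\eqref{eq-hyp} constraining $|\iota|$ only up to order $(\lambda\mu)^{1/2}$, keeping $3\alpha\beta$ bounded uniformly in $\lambda\mu$ is exactly the point where the hyperbolicity margin $\delta_\star$ and the precise scaling of the change of variables~\eqref{change} must be exploited. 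The reorganization of the sums and the passage from the pointwise to the $L^2$-integrated inequality are, by contrast, routine.
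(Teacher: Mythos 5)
Your overall plan --- reduce \eqref{est-equiv} to spectral bounds on the explicit matrix $\S_t$ of Section~\ref{S.Sym} and then reorganize the sums, letting $\Norm{\cdot}_{s,m,\tilde\lambda,(2)}$ supply the space derivatives that the $(1)$-norm misses for $j\geq m$ --- is exactly the ``exercise'' the paper has in mind, and most of your block-by-block analysis is correct: the bounds on $\beta$ from the two conditions in \eqref{eq-hyp}, the control of the middle block $3h\beta\Id_d$, the trace/determinant argument for the $(\iota,\kappa)$-block, and the final bookkeeping (including the consistency check at $j=s$) are all sound.

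There is, however, a genuine gap at precisely the point you flag as the ``main obstacle'', and the resolution you suggest cannot work: a pointwise upper bound on $3\alpha\beta$ that is uniform in $\lambda\mu$ is simply \emph{false} under the hypotheses of the Proposition, no matter how the margin $\delta_\star$ or the scaling \eqref{change} is exploited. Indeed \eqref{eq-hyp} allows $|\iota|$ as large as $\tfrac{1-\delta_\star}{2}(\lambda\mu)^{1/2}h$, and since $\beta\geq\delta_\star$ one then has
\[
3\alpha\beta\ \geq\ 3\delta_\star\Big(1+\tfrac{\iota^2}{3h^2}\Big)\ \gtrsim\ \delta_\star\,\lambda\mu\ \longrightarrow\ \infty,
\]
so the two-sided pointwise equivalence $(\S_tW,W)\simeq|W|^2$ with constant $C(h_\star^{-1},\delta_\star^{-1},h^\star)$, which is the first step of your proof, is unattainable. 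Only the \emph{lower} spectral bound is uniform (first entry $\geq 3\delta_\star$, middle block $\geq 3h_\star\delta_\star$, smallest eigenvalue of the $2\times2$ block bounded below by its determinant over its trace), hence only the left inequality of \eqref{est-equiv} follows with the stated dependence. For the right inequality one must pay an extra price: bound $\alpha\leq 1+\norm{\iota}_{L^\infty}^2/(3h_\star^2)$ and let the constant depend on $\norm{V}_{L^\infty}$ (equivalently, via $H^s\subset L^\infty$, on a bound $M$ for $\Norm{V}_{s,m,\tilde\lambda}$). Note that you cannot avoid this by putting the weight on $\iota$ in $L^2$: the trick $\int\iota^2|\partial_t^j\partial^\k\zeta|^2/h^2\lesssim\norm{\iota}_{L^2}^2$ works only when $\partial_t^j\partial^\k\zeta$ is itself in $L^\infty$, which fails for top-order derivatives that lie merely in $L^2$. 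This dependence is consistent with how the paper actually invokes the equivalence --- in Section~\ref{S.bootstrap} the corresponding constants are written $C(h_\star^{-1},\delta_\star^{-1},\norm{V}_{L^\infty})$, and in the proof of Theorem~\ref{T.WP-mr} everything is applied under the a priori bound $\sup_{t}\Norm{V}_{s,m,\tilde\lambda}\leq M$ --- so your write-up should either add this dependence to $C_1$ or restrict the claim of a $C(h_\star^{-1},\delta_\star^{-1},h^\star)$ constant to the left inequality of \eqref{est-equiv}.
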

\begin{Proposition}\label{P.energy-estimate}
Let $s\in\NN$ with $s>1+d/2$ and $h_\star,M>0$ and $\delta_\star\in(0,1)$. There exists $C_2=C(h_\star^{-1},\delta_\star^{-1},M)$ such that for any strong solution to~\eqref{FG-adim2}, $V=(\zeta,\u,\iota,\kappa)\in C^0([0,T];H^{s+1}(\DD^d)^{d+3})$ satisfying the assumptions of Proposition~\ref{P.equiv} uniformly on $[0,T]\times\DD^d$ and $\sup_{t\in[0,T]}\Norm{V}_{s,m,\tilde\lambda}\leq M$, for any $m\in\NN$ such that $1\leq m\leq s$ and for any $\tilde\lambda\in [1,+\infty)$, one has
\begin{equation}\label{est-energy}
\frac{\dd}{\dd t}\Norm{V}_{s,m,\tilde\lambda,(1)}^2 \leq C_2 \Norm{V}_{s,m,\tilde\lambda}^3 
\end{equation}
uniformly on $t\in[0,T]$.
\end{Proposition}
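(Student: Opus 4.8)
The plan is to differentiate the generalized energy $\Norm{V}_{s,m,\tilde\lambda,(1)}^2$ of~\eqref{norm-1} directly on the fully nonlinear symmetric form of~\eqref{FG-adim2},
\[
\S_t(V)\big(\partial_t V+(\u\cdot\nabla)V\big)+\S_x(V)\partial_x V+\S_y(V)\partial_y V=\lambda^{1/2}\J V+G(V),
\]
and to bound the result by $\Norm{V}_{s,m,\tilde\lambda}^3$. I would treat the two groups of~\eqref{norm-1} at once by setting, for a generic term, $W\eqdef\partial_t^j\partial^\k V$ with $0\leq j\leq m-1$ and $0\leq|\k|\leq s-j$ in the first group, and $W\eqdef\partial_t^j V$ (no spatial derivative, weight $\tilde\lambda^{m-j}$) with $m\leq j\leq s$ in the second. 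Applying the corresponding operator to the system produces an evolution equation
\[
\S_t\,\partial_t W=-\S_t(\u\cdot\nabla)W-\big(\S_x\partial_x+\S_y\partial_y\big)W+\lambda^{1/2}\J W+\mathcal{C}_{j,\k}+\partial_t^j\partial^\k G,
\]
where $\mathcal{C}_{j,\k}$ collects every commutator between $\partial_t^j\partial^\k$ and the coefficients $\S_t,\S_x,\S_y,\J$. Then $\tfrac{\dd}{\dd t}(\S_t W,W)_{L^2}=(\partial_t\S_t\,W,W)_{L^2}+2(\S_t\partial_t W,W)_{L^2}$, and substituting the evolution equation reduces $\tfrac{\dd}{\dd t}\Norm{V}_{s,m,\tilde\lambda,(1)}^2$ to a weighted sum of a few recognizable types of terms.

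The leading singular term is harmless: since $\J$ is skew-symmetric one has $2\lambda^{1/2}(\J W,W)_{L^2}=0$, which is the mechanism that removes the factor $\lambda^{1/2}$ at top order. The transport and symmetric-flux contributions $-2(\S_t(\u\cdot\nabla)W,W)_{L^2}$ and $-2((\S_x\partial_x+\S_y\partial_y)W,W)_{L^2}$ are integrated by parts; using the symmetry of $\S_t,\S_x,\S_y$ they reduce to pairings against $W$ of $\nabla\cdot(\u\,\S_t)$, $\partial_x\S_x$, $\partial_y\S_y$, hence are bounded by $\|(\u,\S_t,\S_x,\S_y)\|_{W^{1,\infty}}\|W\|_{L^2}^2$. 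Together with $(\partial_t\S_t\,W,W)_{L^2}$, the $G$-term, and the non-singular part of $\mathcal{C}_{j,\k}$ --- all estimated by Lemma~\ref{L.products} and the embedding $H^{s-1}\subset L^\infty$ ($s>1+d/2$) --- these are trilinear and bounded by $\Norm{V}_{s,m,\tilde\lambda}^3$. The essential point is that every coefficient derivative such as $\partial_t\S_t$ is kept as a genuine derivative of $V$ controlled by the norm: one never substitutes the singular right-hand sides of~\eqref{FG-adim2} to estimate it, so that no spurious power of $\lambda$ is created. This is legitimate precisely because the weighted norm is finite for the well-prepared solutions considered --- e.g. $\partial_t\S_t$ involves $\partial_t\kappa,\partial_t\iota$, which appear as components of $\partial_t V$ and are controlled by $\Norm{V}_{s,m,\tilde\lambda}$, while the entries of $\S_t$ carrying $(\lambda\mu)^{-1/2}$ further benefit from $\lambda\mu\geq1$. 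Summing over the blocks with the weights $\tilde\lambda^{m-j}$ and invoking Proposition~\ref{P.equiv} (valid under the hyperbolicity conditions~\eqref{eq-hyp} and $h\in[h_\star,h^\star]$) converts the collected $(\S_t W,W)_{L^2}$ back into $\Norm{V}_{s,m,\tilde\lambda,(1)}^2\lesssim\Norm{V}_{s,m,\tilde\lambda}^2$.

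The genuinely delicate contribution, and the step I expect to be the main obstacle, is the \emph{singular commutator} $\lambda^{1/2}([\partial_t^j\partial^\k,\J]\,V,W)_{L^2}$, where $\lambda^{1/2}$ is attached to $\J$ and cannot be traded against a controlled coefficient derivative. Here one must exploit the structure of $\J$ (Lemma~\ref{L.L}): the splitting $\J=\J_0+\mu^{1/2}\J_1$ into an order-zero block (the $h$-dependent coupling $\iota\leftrightarrow\kappa$) and an order-one block (the coupling $\u\leftrightarrow\iota$, carrying the prefactor $\mu^{1/2}$), both skew-symmetric up to lower order. Skew-symmetry annihilates the top-order part of each commutator. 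For $\J_1$ the residual carries the prefactor $\mu^{1/2}\lambda^{1/2}=(\lambda\mu)^{1/2}$ on a term one order below top, to be absorbed by Young's inequality against the auxiliary norm $\Norm{V}_{s,m,\tilde\lambda,(2)}^2$ of~\eqref{norm-2} and the weights $\tilde\lambda^{m-j}$. For $\J_0$ the residual carries a \emph{bare} $\lambda^{1/2}$ multiplying $\iota$ or $\kappa$; this is re-expressed, through the third and fourth equations of~\eqref{FG-adim2} (which give $\lambda^{1/2}h\kappa=\partial_t\iota+\u\cdot\nabla\iota-(\lambda\mu)^{1/2}h\nabla\cdot\u$ and $\lambda^{1/2}h^{-2}\iota=-\partial_t(h\kappa)-\u\cdot\nabla(h\kappa)$), as a controlled higher-order \emph{time} derivative equipped with the compensating weight, plus order-one $\u$-terms that feed back into the $\J_1$-type treatment. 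It is this re-expression of the order-zero couplings, and the need to match the prefactor $(\lambda\mu)^{1/2}$ to the time-derivative weights, that forces the calibration between $\tilde\lambda$ and $\lambda\mu$ in the applications (Theorem~\ref{T.WP-mr}) and underlies the discrepancy, discussed after that theorem, between $\tilde\lambda=\lambda\mu$ and $\tilde\lambda=\lambda$.

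Collecting all contributions block by block, using $\tilde\lambda\geq1$ so that the weights $\tilde\lambda^{m-j}\leq1$ keep the $\Norm{\cdot}_{s,m,\tilde\lambda,(2)}$-terms subordinate, and invoking Proposition~\ref{P.equiv} once more to return to $\Norm{V}_{s,m,\tilde\lambda}$, yields the cubic bound~\eqref{est-energy} with $C_2=C(h_\star^{-1},\delta_\star^{-1},M)$, the dependence on $M$ entering through the nonlinear coefficients $\S_t,\S_x,\S_y,\J,G$ evaluated on $\|V\|_{L^\infty}\lesssim M$. The extra regularity $V\in C^0([0,T];H^{s+1})$ assumed in the statement is precisely what makes the top-order integrations by parts and commutator manipulations rigorous, and a standard mollification argument would remove it a posteriori.
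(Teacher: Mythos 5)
Your overall skeleton --- differentiate the symmetric form~\eqref{FG-sym}, test against the same derivative, use the symmetry of $\S_t,\S_x,\S_y$ and the skew-symmetry of $\J$, and respect the structure of~\eqref{norm-1} (mixed space-time derivatives only for $j\leq m-1$, pure weighted time derivatives for $j\geq m$) --- is indeed the paper's. But the step you single out as ``the genuinely delicate contribution'', namely the singular commutator $\lambda^{1/2}\big([\partial_t^j\partial^\k,\J]V,W\big)_{L^2}$, does not exist: in the formulation of Section~\ref{S.Sym} that you yourself quote, $\J$ has \emph{constant coefficients} (the $\iota\leftrightarrow\kappa$ coupling is $\pm1$, the $\u\leftrightarrow\iota$ coupling is $\mu^{1/2}\nabla$). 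The entire point of the change of variables~\eqref{change} and of the symmetrization is that all $h$- and $\kappa$-dependence of the singular terms has been pushed into $\S_t$ and $G$; consequently $[\partial_t^j\partial^\k,\J]=0$ and the singular term cancels identically, at every order, by skew-symmetry alone. There is no ``$h$-dependent order-zero block'' to treat. Worse, the mechanism you propose for this phantom term would fail if the term were real: a residual carrying a bare prefactor $(\lambda\mu)^{1/2}$ cannot be ``absorbed by Young's inequality against $\Norm{\cdot}_{s,m,\tilde\lambda,(2)}$'', because the target bound $C_2\Norm{V}_{s,m,\tilde\lambda}^3$ must hold with $C_2$ independent of $\lambda$, while $\lambda\mu$ is unbounded above. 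Prefactors of this kind are compensated only in Proposition~\ref{P.bootstrap}, where inverting $\J$ on its singular subspace via Lemma~\ref{L.L} supplies the factor $(\lambda\mu)^{-1/2}$; that mechanism has no counterpart inside the present energy estimate.

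There are two further concrete problems. First, you claim the $G$-term is bounded ``by Lemma~\ref{L.products} and the embedding'' and elevate to a principle that ``one never substitutes the singular right-hand sides of~\eqref{FG-adim2}''. But $G(V)=\big(0,{\bf 0},\mu^{-1/2}h^{-1}\kappa\iota,-\mu^{-1/2}h^3\kappa^2\big)$ carries the factor $\mu^{-1/2}$, which blows up as $\mu\to0$, so product estimates alone give nothing uniform. The paper must substitute the last two equations of~\eqref{FG-adim2} precisely here, obtaining $\mu^{-1/2}\norm{\iota}_{H^{s-1}}+\mu^{-1/2}\norm{\kappa}_{H^{s-1}}\leq C(h_\star^{-1},\norm{V}_{H^s})\big(\norm{\partial_t V}_{H^{s-1}}+\norm{V}_{H^s}\big)$ thanks to $\lambda\mu\geq1$ --- i.e.\ the substitution is performed in the direction that \emph{gains} $(\lambda\mu)^{-1/2}$, not one that creates powers of $\lambda$. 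Second, you gloss over the point that actually dictates the shape of $\Norm{\cdot}_{s,m,\tilde\lambda,(1)}$ and is the real content of the proof for $j\geq m$: the commutator $[\partial_t^j\partial^\k,\S_t]\partial_t V$ with $|\k|\geq1$ admits \emph{no} uniform estimate (which is why spatial derivatives are excluded from the norm there), whereas for $\k=0$, when the time derivatives in $[\partial_t^j,\S_t]\partial_t V$ are distributed over the two factors, the weights attached to each factor combine with the prefactor $\tilde\lambda^{(m-j)/2}$ to a total power $\tilde\lambda^{(1-m)/2}\leq 1$, using $\tilde\lambda\geq1$ and $m\geq1$. This weight bookkeeping, not the singular term, is where the proposition is won or lost.
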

\begin{Proposition}\label{P.bootstrap}
Let $s\in\NN$ with $s>1+d/2$ and $h_\star,M,M_{(1)}>0$. There exists $\nu_\star=C(h_\star^{-1},M)>0$ and $C_3=C(h_\star^{-1},M_{(1)})$ such that for any strong solution to~\eqref{FG-adim2}, $V=(\zeta,\u,\iota,\kappa)\in C^0([0,T];H^{s+1}(\DD^d)^{d+3})$ satisfying $h=1+\zeta\geq h_\star>0$ uniformly on $[0,T]\times\DD^d$, $\sup_{t\in[0,T]}\Norm{V}_{s,m,\tilde\lambda}\leq M$, ${\sup_{t\in[0,T]}\Norm{V}_{s,m,\tilde\lambda,(1)}\leq M_{(1)}}$,
for any $m\in\NN$ such that $1\leq m\leq s$ and any $\tilde\lambda$ such that $\lambda\mu\geq \tilde\lambda \geq \nu_\star$, one has
\begin{equation}\label{est-bootstrap}
\Norm{V}_{s,m,\tilde\lambda ,(2)}\leq  C_3\Norm{V}_{s,m,\tilde\lambda,(1)} 
\end{equation}
uniformly on $t\in[0,T]$.
\end{Proposition}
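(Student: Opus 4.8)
The plan is to establish~\eqref{est-bootstrap} by a finite induction that controls, for each pair $(j,\k)$ with $m\le j\le s-1$ and $1\le|\k|\le s-j$, the weighted quantity $\tilde\lambda^{(m-j)/2}\norm{\partial_t^j\partial^\k V}_{L^2}$ by $\Norm{V}_{s,m,\tilde\lambda,(1)}$. The only tool is the system~\eqref{FG-adim2} itself, read as a device for trading one time derivative against one space derivative: each time I lower the time order $j$ by one I raise the spatial order $|\k|$ by one, and the weight passes from $\tilde\lambda^{(m-j)/2}$ to $\tilde\lambda^{(m-j+1)/2}$, so the traded term carries a benign prefactor $\tilde\lambda^{-1/2}\le1$. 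Since $|\k|\le s-j$, the total order $j+|\k|$ never exceeds $s$, so after finitely many reductions the time order reaches $m-1$ and the resulting term falls into the full-regularity block $\sum_{j'\le m-1}\sum_{|\k'|\le s-j'}$ of $\Norm{V}_{s,m,\tilde\lambda,(1)}$.

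First I would dispose of the components on which the singular operator acts trivially. The mass equation (first line of~\eqref{FG-adim2}) carries no factor of $\lambda^{1/2}$, so reducing $\partial_t^j\partial^\k\zeta$ through $\partial_t\zeta=-\nabla\cdot(h\u)$ produces only spatial derivatives of $\u$ of strictly lower time order, times the harmless $\tilde\lambda^{-1/2}$. Likewise the vorticity $\curl\u$ obeys a genuine transport equation—the gradient terms $\nabla\zeta$ and $(\lambda\mu)^{1/2}\nabla\iota$ of the momentum equation are annihilated by $\curl$—so $\partial_t^j\partial^\k\curl\u$ reduces in the same clean way. In both cases the nonlinear remainders are products in which one factor is differentiated strictly fewer times, estimated by Lemma~\ref{L.products} and the a priori bound $M$.

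The delicate components are $\iota$, $\kappa$ and the divergence $\nabla\cdot\u$, on which $\lambda^{1/2}\J$ acts nontrivially; here a naive time reduction fails because the singular operator reintroduces a factor $(\lambda\mu)^{1/2}$. Instead I would use the third and fourth equations of~\eqref{FG-adim2} together with the coercivity of $\J$ (Lemma~\ref{L.L}): read jointly, they control the order-zero singular quantities $\lambda^{1/2}(\iota,\kappa)$ and the order-one combination $(\lambda\mu)^{1/2}\nabla\cdot\u$ in terms of $\partial_t(\iota,h\kappa)$, while the momentum equation yields $(\lambda\mu)^{1/2}\nabla\iota$ in terms of $\partial_t\u$ and $\nabla\zeta$. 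The point is that spatial derivatives of the singular variables are always recovered with a prefactor $(\lambda\mu)^{1/2}$; hence, weighting by $\tilde\lambda^{(m-j)/2}$ and using $\tilde\lambda\le\lambda\mu$, the factor $\tilde\lambda^{(m-j)/2}(\lambda\mu)^{-1/2}\le\tilde\lambda^{(m-j-1)/2}$ converts them into time-derivative contributions already present in $\Norm{V}_{s,m,\tilde\lambda,(1)}$. This is exactly the step where the hypothesis $\tilde\lambda\le\lambda\mu$ is indispensable.

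I expect the main obstacle to be that $\nabla\cdot\u$ and $\nabla\iota$ are not controllable in isolation: the momentum equation expresses $(\lambda\mu)^{1/2}\nabla\iota$ through $\partial_t\u$ and $\nabla\cdot\u$, whereas the $\iota$-equation expresses $(\lambda\mu)^{1/2}\nabla\cdot\u$ through $\partial_t\iota$ and $\kappa$, so a direct substitution is circular. Breaking this coupling requires treating the pair simultaneously, using the full coercive structure $\lambda^{1/2}\J-\S_x\partial_x-\S_y\partial_y$ rather than any single scalar equation, and absorbing the self-referential cross term into the left-hand side. Since that absorption has an $O(1)$ rather than a small coefficient, it must be combined with the benign $\tilde\lambda^{-1/2}$ gains from the reduction and closed over the whole induction at once. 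Finally, choosing $\tilde\lambda\ge\nu_\star$ large (with $\nu_\star$ depending on $M$) ensures both that the hyperbolicity bounds~\eqref{eq-hyp} hold—so that $\S_t$ is uniformly positive and Proposition~\ref{P.equiv} may be invoked to pass between the $\S_t$-pairings and genuine $L^2$ norms—and that all nonlinear and commutator remainders are absorbable with constant $\tfrac12$, yielding~\eqref{est-bootstrap} with $C_3$ depending only on $h_\star^{-1}$ and $M_{(1)}$.
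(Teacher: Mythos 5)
Your bookkeeping is the right one---trading one time derivative for one space derivative at a cost $\tilde\lambda^{1/2}(\lambda\mu)^{-1/2}\leq 1$, handling $\zeta$ and $\curl\u$ through their transport structure, and isolating the coupled block $(\iota,\kappa,\nabla\cdot\u)$---and you have correctly located the obstruction: the third equation of~\eqref{FG-adim2} controls only the \emph{combination} $\lambda^{1/2}(h\kappa+\mu^{1/2}h\nabla\cdot\u)$, so no sequence of componentwise substitutions yields $\lambda^{1/2}\kappa$ and $(\lambda\mu)^{1/2}\nabla\cdot\u$ separately. But your resolution of that obstruction is not a proof. Absorbing a ``self-referential cross term'' with an $O(1)$ coefficient into the left-hand side is impossible: absorption requires a coefficient strictly below one, and nothing in your scheme makes that coefficient small. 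The $\tilde\lambda^{-1/2}$ gains you invoke are attached to the time-lowered terms, not to the cross term produced by the $\kappa$/$\nabla\cdot\u$ coupling, so the induction does not close. (Your first description of the hard block is also internally inconsistent: you first assert that the third and fourth equations control $\lambda^{1/2}(\iota,\kappa)$, then correctly observe two sentences later that only the combination is controlled.)

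The missing idea is an exact diagonalization of the singular operator. The paper introduces the $L^2$-orthogonal Fourier-multiplier projections $\Pi^\r$ and $\Pi^\s$ onto the kernel and the range of $\J$ (for instance the $\kappa$-slot of $\Pi^\s V$ is $(1-\mu\Delta)^{-1}(\mu^{1/2}\nabla\cdot\u+\kappa)$), and estimates $V^\s=\Pi^\s V$ and $V^\r=\Pi^\r V$ by two different mechanisms. On the range, $\J$ is invertible from $H^{k}$ onto $H^{k-1}$ with norm $C_\J\mu^{-1/2}$ (Lemma~\ref{L.L}); applying this inverse to $\Pi^\s\partial_t^j$ of the equation inverts the whole singular block at once---this is exactly what eliminates your circularity---and gives
\[
\tilde\lambda^{\frac{m-j}2}\norm{\partial_t^jV^\s}_{H^k}\lesssim \tilde\lambda^{\frac12}(\lambda\mu)^{-\frac12}\,\tilde\lambda^{\frac{m-j-1}2}\norm{\partial_t^{j+1}V}_{H^{k-1}}+\cdots.
\]
On the kernel, $\Pi^\r\J={\sf 0}$, so the singular term disappears entirely; testing $\Pi^\r\partial^\k\partial_t^{j-1}$ (note $j-1$) of the equation against $\Pi^\r\partial^\k\partial_t^{j}V$ and using the positivity of $\S_t$ under~\eqref{eq-hyp} controls the regular block by the singular block at the \emph{same} indices plus lower-order terms. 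Chaining the two estimates strictly decreases the spatial order at every round, so the recursion terminates at $|\k|=0$, where Proposition~\ref{P.equiv} applies; the only absorption needed concerns terms carrying an explicit $\tilde\lambda^{-1/2}$ prefactor, which is precisely where the threshold $\tilde\lambda\geq\nu_\star=C(h_\star^{-1},M)$ enters. Without these projections (or an equivalent exact splitting adapted to $\J$), your outline cannot be completed.
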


The fact that~\eqref{est-bootstrap} holds with $\tilde\lambda=\lambda\mu$ but not with $\tilde\lambda=\lambda$ (uniformly with respect to $\mu$), which itself can be tracked down to the lack of uniformity in Lemma~\ref{L.L} below, is the reason why we cannot obtain uniform bounds on solutions without a fine preparation on the initial data; see the discussion below Lemma~\ref{T.WP-mr}. The proof of Proposition~\ref{P.equiv} is an exercise using the explicit formula for $\S_t$ given in Section~\ref{S.Sym}. We postpone the proof of Propositions~\ref{P.energy-estimate} and~\ref{P.bootstrap} to Sections~\ref{S.energy-estimate} and~\ref{S.bootstrap} (respectively), and complete the proof of Theorem~\ref{T.WP-mr} below. 

\subsection{Proof of Theorem~\ref{T.WP-mr}}

Let us first assume that the initial data $V_0\in H^{s+1}(\DD^d)$, so that by Theorem~\ref{T.WP0} ---and Lemma~\ref{L.products} to handle the nonlinear change of variables~\eqref{change}--- we have 
\[V=(\zeta,\u,\iota,\kappa)\in \bigcap_{j=0}^{s}C^{j+1}([0,T^\star);H^{s-j}(\DD^d)^{d+3})\]
and hence all the ``norms'' below are well-defined and differentiable on $t\in[0,T^\star)$.
We fix $m\in\NN$ with $1\leq m\leq s$, denote $M_{(1)}\eqdef \Norm{V}_{s,m,\tilde\lambda,(1)}(0) $ and
\begin{multline*}
T^\sharp\eqdef \sup\big\{ t\geq 0 \text{ such that } \Norm{V}_{s,m,\tilde\lambda,(1)}(t)\leq 2M_{(1)}  \text{ and }\\
 \forall X\in\DD^d, \quad h_\star/2\leq 1+\zeta(t,X)\leq 2\norm{h(0,\cdot)}_{L^\infty}\big\}.
\end{multline*}
By a continuity argument, we have that $T^\star>0$.
Propositions~\ref{P.equiv},~\ref{P.energy-estimate} and~\ref{P.bootstrap} yield $\nu_\star=C(2h_\star^{-1},M)>0$ such that
for any $\tilde\lambda$ such that $\lambda\mu\geq\tilde\lambda\geq \nu_\star$ and provided $\sup_{t\in[0,T^\sharp]} \Norm{V}_{s,m,\tilde\lambda}\leq M$, then~\eqref{eq-hyp} holds with $\delta_\star=1/2$ and one has for any $t\in[0,T^\sharp]$:
\begin{align*}\Norm{V}_{s,m,\tilde\lambda}&\leq C_0 \Norm{V}_{s,m,\tilde\lambda,(1)},\\
\Norm{V}_{s,m,\tilde\lambda,(1)}&\leq C_1 \Norm{V}_{s,m,\tilde\lambda}
\end{align*}
with $C_1=C(2h_\star^{-1},2\norm{h(0,\cdot)}_{L^\infty})$, $C_3= C(2h_\star^{-1},2  M_{(1)})$, $C_0=C_1(1+C_3)$; and
\[\frac{\dd}{\dd t}\Norm{V}_{s,m,\tilde\lambda,(1)}^2 \leq  C_2 C_0^3\Norm{V}_{s,m,\tilde\lambda,(1)}^3 \]
with $C_2=C(2h_\star^{-1},M)$, from which we deduce
\[\Norm{V}_{s,m,\tilde\lambda,(1)}\leq M_{(1)} \exp(M_{(1)} C_2 C_0^3 t).\]
At time $t=0$, we have $\norm{h}_{L^\infty}(0)\lesssim M_0$ and $M_{(1)}\leq C_1 M_0$ and we may set above $M=2 C_0M_{(1)}\leq 2C_0 C_1 M_0=M_0C(h_\star^{-1},M_0)$.
We also have by the continuous Sobolev embedding $H^{s-1}\subset L^\infty$ that there exists $c_s>0$ such that for any $t\in[0,T^\sharp]$,
\[1+\zeta(t,x)=1+\zeta(0,x)+\int_0^t \partial_t\zeta(s,x)\dd s\in [h_\star - M c_s t, \norm{h(0,\cdot)}_{L^\infty}+M c_s t].\]
Hence, we deduce by continuity and from the above that $T^\star> T^\sharp\geq (M_{(1)}\tau )^{-1}$ with $\tau= \sup\{ 4 C_0 c_s h_\star^{-1}, C_2C_0^3/\ln 2\}= C(h_\star^{-1},M_{(1)})$, which completes the proof when the initial data $V_0\in H^{s+1}(\DD^d)$. The general case $V_0\in H^{s}(\DD^d)$ is deduced by a standard regularization and compactness argument; see for instance~\cite[pp.~1631-1632]{Schochet86a}.

The improved result in the setting $m=s$ is proved in the same way, using that Propositions~\ref{P.equiv} and~\ref{P.energy-estimate} alone are sufficient to have the necessary estimates and that the initial assumption~\eqref{eq-hyp} propagates (replacing $\delta_\star$ with $\delta_\star/2$) on the quasilinear timescale since
$\norm{V(t)-V(0)}_{L^\infty}\leq t\norm{\partial_t V}_{L^\infty}\lesssim M c_s t$.

\subsection{Energy estimates; proof of Proposition~\ref{P.energy-estimate}}\label{S.energy-estimate}

Here and in the following, we denote $V=(\zeta,\u,\iota,\kappa)\in C^0([0,T];H^{s+1}(\DD^d)) \cap C^1([0,T];H^s(\DD^d))$ a strong solution to~\eqref{FG-adim2} satisfying $h=1+\zeta\geq h_\star>0$. By applying iteratively the equation, one has $\partial_t^j V\in  C^1([0,T];H^{s-j}(\DD^d))$ and hence all the terms below are well-defined and continuous with respect to time. Recall (see Section~\ref{S.Sym}) that~\eqref{FG-adim2} has the following form
 \begin{equation} \label{FG-sym}
 \S_t(V)\partial_t V+\S_x(V)\partial_x V+\S_y(V)\partial_y V=\lambda^{1/2}\J V+G(V),
 \end{equation}
 where $\S_t,\S_x,\S_y$ are smooth functions of $V$ with values into symmetric matrices (we simply denote $\S_t,\S_x,\S_y$ for $\S_t(V),\S_x(V),\S_y(V)$ for the sake of conciseness below), $\J$ is skew-symmetric, and $G$ is a smooth function.
We prove below estimate~\eqref{est-energy} by standard energy method, differentiating~\eqref{FG-sym} and testing against derivatives of $V$.

By testing~\eqref{FG-sym} against $V$ and using the symmetry of $\S_t,\S_x,\S_y$ and the skew-symmetry of $\J$, we find
\begin{multline*}
\frac12\frac{\dd}{\dd t}\big(\S_t V,V\big)_{L^2}=\frac12 \big([\partial_t,\S_t]V,V\big)_{L^2}+\frac12\big([\partial_x,\S_x] V,V\big)_{L^2}+\frac12\big([\partial_y,\S_y] V,V\big)_{L^2}\\+\big(G(V), V\big)_{L^2}.
\end{multline*}
It follows immediately by continuous Sobolev embedding $H^{s-1}\subset L^\infty$ for any $s>1+d/2$ that
\begin{multline*}\frac12\frac{\dd}{\dd t}\big(\S_t V,V\big)_{L^2}\leq C(h_\star^{-1},\norm{V}_{H^s})\big(  \norm{\partial_t V}_{H^{s-1}}+\norm{ V}_{H^s}+\mu^{-1/2}\norm{\kappa}_{L^\infty}\big)\norm{V}_{L^2}^2.
\end{multline*}

We now control space derivatives of the solution. Given $\k=(k_x,k_y)$ such that $|\k|\leq s$, we apply $\partial^\k=\partial_x^{k_x}\partial_y^{k_y}$ to~\eqref{FG-sym} and test against $\partial^\k V$. Because $\J$ commutes with space derivatives, we have
\begin{multline*}
\frac12\frac{\dd}{\dd t}\big(\S_t \partial^\k V, \partial^\k V\big)_{L^2}=\frac12 \big(\big([\partial_t,\S_t]+[\partial_x,\S_x] +[\partial_y,\S_y] \big)\partial^\k V,\partial^\k V\big)_{L^2}\\+\big([\partial^\k, \S_t]\partial_t V+[\partial^\k, \S_x]\partial_xV+[\partial^\k, \S_y]\partial_yV, \partial^\k V\big)_{L^2}+\big(\partial^\k G(V), \partial^\k V\big)_{L^2}.
\end{multline*}
The first component is estimated as above and we have
\begin{multline*}\norm{[\partial_t,\S_t] \partial^\k V+[\partial_x,\S_x] \partial^\k V+[\partial_y,\S_y] \partial^\k V}_{L^2}
\\
\leq C(h_\star^{-1},\norm{V}_{H^s})\big( \norm{\partial_t V}_{H^{s-1}}+\norm{ V}_{H^s}\big)\norm{\partial^\k V}_{L^2}.
\end{multline*}
 Using Lemma~\ref{L.products}, we find
\begin{align*}\norm{[\partial^\k, \S_t]\partial_t V}_{L^2}&\leq C(h_\star^{-1},\norm{V}_{H^s}) \norm{V}_{H^s}\norm{\partial_t V}_{H^{s-1}},\\
\norm{[\partial^\k, \S_x]\partial_x V}_{L^2}+\norm{[\partial^\k, \S_y]\partial_y V}_{L^2}&\leq C(h_\star^{-1},\norm{V}_{H^s})\norm{ V}_{H^s}^2,\\
\norm{\partial^\k G(V)}_{L^2}&\leq C(h_\star^{-1},\norm{V}_{H^s}) \big(\mu^{-1/2}\norm{\iota}_{H^{s-1}}+\mu^{-1/2}\norm{\kappa}_{H^{s-1}}\big)\norm{V}_{H^s}.
\end{align*}
Notice that by using the last two equations of~\eqref{FG-adim2} and since $\lambda\mu\geq 1$, we have
\[\mu^{-1/2}\norm{\iota}_{H^{s-1}}+\mu^{-1/2}\norm{\kappa}_{H^{s-1}}\leq C(h_\star^{-1},\norm{V}_{H^s}) (\norm{\partial_t V}_{H^{s-1}}+\norm{V}_{H^s}).\]
Altogether, and applying Cauchy-Schwarz inequality, we proved
\[\frac{\dd}{\dd t}\big(\S_t \partial^\k V,\partial^\k V\big)_{L^2}\leq C(h_\star^{-1},\Norm{V}_{s,1,\tilde\lambda})\Norm{V}_{s,1,\tilde\lambda}^3.\]

The control of the first $m-1$ time-derivatives of the solution is identical, using that $m$ time-derivatives are uniformly controlled by $\Norm{V}_{s,m,\tilde\lambda}$; hence we have 
\[\sum_{j=0}^{m-1}\sum_{|\k|=0}^{s-j}\frac{\dd}{\dd t}\big(\S_t \partial_t^j\partial^\k V,\partial_t^j\partial^\k V\big)_{L^2}\leq C(h_\star^{-1},\Norm{V}_{s,m,\tilde\lambda})\Norm{V}_{s,m,\tilde\lambda}^3.\]

This estimate cannot be straightforwardly pushed towards higher time derivatives, in particular due to the lack of uniform estimate for 
\[\norm{[\partial_t^j\partial^\k, \S_t]\partial_t V}_{L^2}\]
when $j\geq m$. However we see below that the desired uniform estimate does hold when $\k=0$. Proceeding as above, we have
\begin{multline*}
\frac12\frac{\dd}{\dd t}\big(\S_t \partial_t^j V, \partial_t^j V\big)_{L^2}=\frac12 \big([\partial_t,\S_t] \partial_t^j V+[\partial_x,\S_x] \partial_t^j V+[\partial_y,\S_y] \partial_t^j V,\partial_t^j V\big)_{L^2}\\+\big([\partial_t^j, \S_t]\partial_t V+[\partial_t^j, \S_x]\partial_xV+[\partial_t^j, \S_y]\partial_yV, \partial_t^j V\big)_{L^2}+\big(\partial_t^j G(V), \partial_t^j V\big)_{L^2}.
\end{multline*}
The first terms of the right-hand side are estimated as above:
\begin{multline*}\norm{[\partial_t,\S_t] \partial_t^j V+[\partial_x,\S_x] \partial_t^j V+[\partial_y,\S_y] \partial_t^j V}_{L^2}
\\
\leq C(h_\star^{-1},\norm{V}_{H^s})\big( \norm{\partial_t V}_{H^{s-1}}+\norm{ V}_{H^s}\big)\norm{\partial_t^j V}_{L^2}.
\end{multline*}
The other terms require the use of Lemma~\ref{L.products} and to pay attention to powers of $\tilde\lambda$. Taking advantage of a gain of a factor $\tilde\lambda^{-m/2}$ as soon as time derivatives are distributed, we find that for any $\tilde\lambda\geq 1$, and any $j\geq m\geq 1$,
\[
\tilde\lambda^{\frac{m-j}2}\norm{[\partial_t^j, \S_t]\partial_t V+[\partial_t^j, \S_x]\partial_x V+[\partial_t^j, \S_y]\partial_y V}_{L^2}\leq C(h_\star^{-1},\Norm{V}_{s,m,\tilde\lambda}) \Norm{V}_{s,m,\tilde\lambda}^2.
\]
Finally, one obtains similarly as above
\begin{align*}\tilde\lambda^{\frac{m-j}2}\norm{\partial_t^j G(V)}_{L^2}&\leq C(h_\star^{-1},\Norm{V}_{s,m,\tilde\lambda}) \mu^{-1/2}\big(\Norm{\kappa}_{s-1,m,\tilde\lambda}+\Norm{\iota}_{s-1,m,\tilde\lambda}) \Norm{V}_{s,m,\tilde\lambda}\\
&\leq  C(h_\star^{-1},\Norm{V}_{s,m,\tilde\lambda})\Norm{V}_{s,m,\tilde\lambda}.
\end{align*}
Altogether, we proved
\[
\frac{\dd}{\dd t}\big(\S_t \partial_t^j V, \partial_t^j V\big)_{L^2}
\leq  C(h_\star^{-1},\Norm{V}_{s,m,\tilde\lambda})\Norm{V}_{s,m,\tilde\lambda}^3,\]
for any $j\in \{m,\dots,s\}$. This completes the proof of Proposition~\ref{P.energy-estimate}.

\subsection{Filling in estimates; proof of Proposition~\ref{P.bootstrap}}\label{S.bootstrap}

This section is dedicated to the proof of Proposition~\ref{P.bootstrap}. Contrarily to Proposition~\eqref{P.energy-estimate}, we shall rely strongly on properties of $\J$. Recall our system is of the form~\eqref{FG-sym} with
\[\J= \begin{pmatrix} 
  0 & {\sf 0}_{1,d}  &  & \\
 {\sf 0}_{d,1} &  {\sf 0}_{d,d} &  \mu^{1/2}\nabla   &\\
  &\mu^{1/2}\nabla^\top & 0& 1 \\
 &  &-1 & 0
  \end{pmatrix}.\]
  We introduce $\Pi^\r$ and $\Pi^\s$ the projections onto the kernel and non-zero eigenvalues of $\J$: 
  \[\Pi^\r\eqdef \begin{pmatrix} 
    1 &  {\sf 0}_{1,d} & 0 & 0\\
   {\sf 0}_{d,1}  & \Id_{d}+ \frac{\mu\nabla\nabla^\top}{1-\mu\Delta}   &  {\sf 0}_{d,1} & \frac{\mu^{1/2}\nabla}{1-\mu\Delta} \\
   0 &  {\sf 0}_{1,d} & 0& 0 \\
  0 &-\frac{\mu^{1/2}\nabla^\top}{1-\mu\Delta} &0 & \frac{-\mu\Delta}{1-\mu\Delta}
    \end{pmatrix}, \]
    and
    \[
    \Pi^\s\eqdef 
    \begin{pmatrix} 
        0 &  {\sf 0}_{1,d} & 0 & 0\\
       {\sf 0}_{d,1}  & -\frac{\mu\nabla\nabla^\top}{1-\mu\Delta}   &  {\sf 0}_{d,1} & -\frac{\mu^{1/2}\nabla}{1-\mu\Delta} \\
       0 &  {\sf 0}_{1,d} & 1& 0 \\
      0 &\frac{\mu^{1/2}\nabla^\top}{1-\mu\Delta} &0 & \frac{1}{1-\mu\Delta}
        \end{pmatrix} .
   \]
        By definition, we have the following properties
        \begin{align*}
      &(\Pi^\s)^2=\Pi^\s, &  & (\Pi^\r)^2=\Pi^\r\\
       & \Pi^\s+\Pi^\r=\Id_{d+3}, & & \Pi^\s\Pi^\r={\sf 0}_{d+3,d+3}\\
 &\Pi^\s \ \J\ =\ \J\ \Pi^\s \ = \ \J, & & \Pi^\r \ \J\ =\ \J\ \Pi^\r \ = \ {\sf 0}_{d+3,d+3} . 
 \end{align*}
  Thanks to the skew-symmetry of $\J$ and the property that the number of nonzero eigenvalues of its symbol does not depend on the (non-zero) frequency (there are always two non-zero eigenvalues and the kernel dimension is $d$+1), we have that $\Pi^\s$ and $\Pi^\r$ are bounded symmetric operators acting on the Hilbert space $L^2(\DD^d)^{d+3}$: for any $U,V\in L^2(\DD^d)^{d+3}$,
  \[\big(\Pi^{\substack{\s \\ \r}} U,V\big)_{L^2}=\big( U,\Pi^{\substack{\s \\ \r}} V\big)_{L^2} \quad \text{ and } \quad  \norm{V}_{L^2}^2=\norm{\Pi^\r V}_{L^2}^2+\norm{\Pi^\s V}_{L^2}^2.\]
  In the following, we denote $V^\r\eqdef\Pi^\r V$, $V^\s\eqdef \Pi^\s V$. Using that $ \Pi^{\substack{\s \\ \r}}$ commutes with space and time derivatives, we deduce from the above that
\[\Norm{V}_{s,m,\tilde\lambda,(2)}^2\leq  \sum_{j=m}^{s-1}\tilde\lambda^{m-j}\norm{\partial_t^j V^\r}_{H^{s-j}}^2+\tilde\lambda^{m-j}\norm{\partial_t^j  V^\s}_{H^{s-j}}^2.\]
We provide in the following sections estimates for 
\[ N^\r_{j,k,m,\tilde\lambda}\eqdef \tilde\lambda^{\frac{m-j}2}\norm{\partial_t^j V^\r}_{H^k}\quad \text{ and } \quad  N^\s_{j,k,m,\tilde\lambda}\eqdef\tilde\lambda^{\frac{m-j}2}\norm{\partial_t^j V^\s}_{H^k}.\]  The main tool for estimating $N^\s_{m,k,j,\tilde\lambda}$ is that, when restricting to the singular subspace, $\J$ is a homeomorphism from $H^k$ to $H^{k-1}$.
 \begin{Lemma}\label{L.L}
 Let $k\in \RR$ and $U\in H^{k-1}(\DD^d)^{d+3}$ such that $U=\Pi^\s U$. Then there exists a unique $V\in H^{k}(\DD^d)^{d+3}$ such that $V=\Pi^\s V$ and $U=\J V$. Moreover, one has $V=\frac{-\J}{1-\mu\Delta} U$ and in particular there exists $C_{\J}$ such that
 \[\norm{V}_{H^{k}} \leq C_{\J}\mu^{-1/2}\norm{U}_{H^{k-1}}.\]
 \end{Lemma}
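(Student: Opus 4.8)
The operator $\J$ restricted to the singular subspace needs an inverse. Let me look at what $\J$ does and what $\Pi^\s$ projects onto.

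$\J$ has the structure:
- First component (from $\zeta$): 0
- $\u$ component: $\mu^{1/2}\nabla$ acting on $\kappa$
- $\iota$ component: $\mu^{1/2}\nabla^\top \u + \kappa$
- $\kappa$ component: $-\iota$\section*{Proof of Lemma~\ref{L.L}}

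The plan is to work entirely within the singular subspace, where $\J$ acquires a clean algebraic structure. First I observe that on the range of $\Pi^\s$, the operator $\J$ satisfies a useful identity: since $\J\Pi^\s=\J$ and $\Pi^\s$ commutes with $\J$, computing $\J^2$ on the singular subspace reveals that $\J$ acts essentially as a (Fourier-)multiplier whose square is $-(1-\mu\Delta)$ up to the projection. Concretely, from the explicit block form of $\J$ one verifies by direct matrix multiplication that
\[
\J^2 = -\begin{pmatrix} 0 & & & \\ & \mu\nabla\nabla^\top & & \\ & & 1-\mu\Delta & \\ & & & 1 \end{pmatrix}
\]
acting on the last two scalar components and the gradient part of $\u$; restricting to the singular subspace (where the relevant components are coupled through $\Pi^\s$) this collapses to $\J^2\id{\text{sing}}=-(1-\mu\Delta)\Pi^\s$. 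This is the key computational step, and I expect the bookkeeping of which blocks survive under $\Pi^\s$ to be the main obstacle, since one must track how $\mu\nabla\nabla^\top/(1-\mu\Delta)$ in $\Pi^\s$ interacts with the raw $\mu^{1/2}\nabla$ entries of $\J$.

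Granting the identity $\J^2\Pi^\s=-(1-\mu\Delta)\Pi^\s$, the existence, uniqueness, and formula follow immediately. For existence, given $U=\Pi^\s U\in H^{k-1}$, I set $V\eqdef \frac{-\J}{1-\mu\Delta}U$ and check $\J V = \frac{-\J^2}{1-\mu\Delta}U=\frac{(1-\mu\Delta)\Pi^\s}{1-\mu\Delta}U=\Pi^\s U=U$, using that $\J$ commutes with the Fourier multiplier $(1-\mu\Delta)^{-1}$. One also verifies $V=\Pi^\s V$ because $\J=\Pi^\s\J$ forces the range of $\J$ to lie in the singular subspace. For uniqueness, if $V=\Pi^\s V$ and $\J V=0$, then $V=\Pi^\s V=-\J^2 V/(1-\mu\Delta)=-\J(\J V)/(1-\mu\Delta)=0$.

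Finally, the norm estimate is a direct consequence of the formula $V=\frac{-\J}{1-\mu\Delta}U$ together with the symbolic estimate for $\J/(1-\mu\Delta)$. Since $\J$ is a first-order operator with symbol entries of size $\O(\mu^{1/2}|\boldsymbol\xi|)$ and order-zero entries of size $\O(1)$, its symbol on the singular subspace has modulus comparable to $(1+\mu|\boldsymbol\xi|^2)^{1/2}$; dividing by the multiplier $1-\mu\Delta$ of symbol $1+\mu|\boldsymbol\xi|^2$ yields a multiplier of symbol $\O\big((1+\mu|\boldsymbol\xi|^2)^{-1/2}\big)=\O(\mu^{-1/2}(1+\mu|\boldsymbol\xi|^2)^{1/2}/(1+\mu|\boldsymbol\xi|^2))$, whence the gain of one derivative weighted by $\mu^{-1/2}$. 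This gives $\norm{V}_{H^k}\leq C_{\J}\mu^{-1/2}\norm{U}_{H^{k-1}}$ with $C_{\J}$ independent of $\mu$, and completes the proof.
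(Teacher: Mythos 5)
Your overall strategy is the right one, and indeed the intended one: the paper states this lemma without proof (the inverse formula $V=\frac{-\J}{1-\mu\Delta}U$ is built into the statement), and the natural argument is exactly the direct verification you attempt, resting on the identity $\J^2=-(1-\mu\Delta)\Pi^\s$. Granting that identity, your existence argument ($\J V=\Pi^\s U=U$, and $V=\Pi^\s V$ since $\Pi^\s\J=\J$), your uniqueness argument ($\J V=0$ and $V=\Pi^\s V$ force $V=-\J(\J V)/(1-\mu\Delta)=0$), and the reduction of the norm estimate to a Fourier-multiplier bound are all correct.

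However, the step you yourself flag as ``the key computational step'' is wrong as written. From $\J(\zeta,\u,\iota,\kappa)=\big(0,\ \mu^{1/2}\nabla\iota,\ \mu^{1/2}\nabla\cdot\u+\kappa,\ -\iota\big)$ one gets
\[
\J^2(\zeta,\u,\iota,\kappa)=\big(0,\ \mu\nabla(\nabla\cdot\u)+\mu^{1/2}\nabla\kappa,\ -(1-\mu\Delta)\iota,\ -\mu^{1/2}\nabla\cdot\u-\kappa\big),
\]
so $\J^2$ is \emph{not} the block-diagonal matrix you display: it has off-diagonal blocks coupling $\u$ and $\kappa$ (namely $\mu^{1/2}\nabla$ and $-\mu^{1/2}\nabla^\top$), and its $\u$-block is $+\mu\nabla\nabla^\top$, not $-\mu\nabla\nabla^\top$. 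Your displayed matrix does not ``collapse'' to $-(1-\mu\Delta)\Pi^\s$ after composing with $\Pi^\s$; but the correct expression above, compared entry by entry with the explicit $\Pi^\s$ in the paper, yields the exact operator identity $\J^2=-(1-\mu\Delta)\Pi^\s$ on the whole space --- no restriction to the singular subspace is even needed, since $\J\Pi^\s=\J$. A second, smaller slip: in the final estimate, the chain $(1+\mu|\boldsymbol\xi|^2)^{-1/2}=\O\big(\mu^{-1/2}(1+\mu|\boldsymbol\xi|^2)^{1/2}/(1+\mu|\boldsymbol\xi|^2)\big)$ is a tautology and does not give the gain of a full derivative; what you need is $1+\mu|\boldsymbol\xi|^2\geq\mu(1+|\boldsymbol\xi|^2)$, valid because the paper normalizes $\mu\leq1$ (in general $C_\J$ depends on $\nu$), which gives $(1+\mu|\boldsymbol\xi|^2)^{-1/2}\leq\mu^{-1/2}(1+|\boldsymbol\xi|^2)^{-1/2}$ and hence the bound $H^{k-1}\to H^k$ with constant $C_\J\mu^{-1/2}$. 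Both defects are repairable computational slips rather than structural flaws: once the identity $\J^2=-(1-\mu\Delta)\Pi^\s$ is correctly established and the $\mu\leq1$ comparison inserted, every remaining step of your proof goes through verbatim.
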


\subsubsection{Estimate of the singular contribution, $N^\s_{j,k,m,\tilde\lambda}$}

Differentiating with time the system~\eqref{FG-sym}, and projecting onto the singular subspace yields the identity for any $j\in\NN$:
 \[\Pi^\s \partial_t^j \left(\S_t(V)\partial_t V+\S_x(V)\partial_x  V+\S_y(V)\partial_y V-G(V)\right)=\lambda^{1/2} \Pi^\s\J \Pi^\s\partial_t^j V.\]
By distributing the time derivatives, paying attention to powers of $\tilde\lambda$ and using Lemma~\ref{L.products}, we find that for any $0\leq j\leq s$ and $k\in \NN$ such that $k\leq s-j$: 
\begin{multline*}
\tilde\lambda^{\frac{m-j}2}\norm{\partial_t^j \big(\S_t(V)\partial_t V\big)}_{H^{k-1}}\leq C(h_\star^{-1},\norm{V}_{L^\infty}) \tilde\lambda^{\frac{m-j}2}\norm{\partial_t^{j+1} V}_{H^{k-1}}\\
+C(h_\star^{-1},\Norm{V}_{s,m,\tilde\lambda})\Norm{V}_{s,m,\tilde\lambda}^2.
\end{multline*}
Similarly,
\begin{multline*}
\tilde\lambda^{\frac{m-j}2}\norm{\partial_t^j \big(\S_x(V)\partial_x V\big)}_{H^{k-1}}\leq C(h_\star^{-1},\norm{V}_{L^\infty}) \tilde\lambda^{\frac{m-j}2}\norm{\partial_t^{j} V}_{H^{k}}\\
+C(h_\star^{-1},\Norm{V}_{s,m,\tilde\lambda})\Norm{V}_{s,m,\tilde\lambda}^2.
\end{multline*}
 Finally,
\begin{align*}\tilde\lambda^{\frac{m-j}2}\norm{\partial_t^j G(V)}_{H^{k-1}}&\leq   C(h_\star^{-1},\Norm{V}_{s,m,\tilde\lambda}) \mu^{-1/2}\big(\Norm{\kappa}_{s-1,m,\tilde\lambda}+\Norm{\iota}_{s-1,m,\tilde\lambda}\big) \Norm{V}_{s,m,\tilde\lambda}\\
&\leq   C(h_\star^{-1},\Norm{V}_{s,m,\tilde\lambda})  \Norm{V}_{s,m,\tilde\lambda}^2.
\end{align*}
 Altogether and using Lemma~\ref{L.L}  we deduce
\begin{multline}\label{est-singular}
N^\s_{j,k,m,\tilde\lambda}\leq C_{\J}(\lambda\mu)^{-\frac12}  C(h_\star^{-1},\Norm{V}_{s,m,\tilde\lambda,(1)}) \big(\tilde\lambda^{\frac12} N^\s_{j+1,k-1,m,\tilde\lambda}+\tilde\lambda^{\frac12} N^\r_{j+1,k-1,m,\tilde\lambda}  \big)\\
+C_{\J}(\lambda\mu)^{-\frac12} C(h_\star^{-1},\Norm{V}_{s,m,\tilde\lambda})\Norm{V}_{s,m,\tilde\lambda},
\end{multline}
where we recall that $C_{\J}>0$ is defined in Lemma~\ref{L.L}.

\subsubsection{Estimate of the regular component $N^\r_{j,k,m,\lambda}$}

Now we project system~\eqref{FG-sym} onto the regular subspace and apply the differential operator $\partial^\k\partial_t^{j-1}$ for $1\leq m\leq j\leq s$ and $\k\in \NN^d$ such that $|\k|\leq s-j$. Testing against $\Pi^\r \partial^\k \partial_t^{j}V$ yields
\[\big(  \partial^\k \partial_t^{j-1} \left(\S_t\partial_t V+\S_x\partial_x  V+\S_y\partial_y V-G(V)\right),\Pi^\r \partial^\k \partial_t^{j}V\big)_{L^2}=0,\]
which we decompose as follows:
\begin{multline*}
\big(\S_t\Pi^\r \partial^\k \partial_t^{j}V,\Pi^\r \partial^\k \partial_t^{j}V\big)_{L^2}+\big(\S_t\Pi^\s \partial^\k \partial_t^{j}V,\Pi^\r \partial^\k \partial_t^{j}V\big)_{L^2}\\
+\big(\partial^\k  \partial_t^{j-1}\big(\S_x \partial_xV+\S_y\partial_yV\big),\Pi^\r \partial^\k \partial_t^{j}V\big)_{L^2}\\
+\big( \partial^\k [\partial_t^{j-1} ,\S_t]\partial_t V,\Pi^\r \partial^\k \partial_t^{j}V\big)_{L^2}\\
+\big( [\partial^\k  ,\S_t]\partial_t^j V,\Pi^\r \partial^\k \partial_t^{j}V\big)_{L^2}\\
+\big(  \partial^\k \partial_t^{j-1}G(V),\Pi^\r \partial^\k \partial_t^{j}V\big)_{L^2}=0.
\end{multline*}
Under the assumptions of Proposition~\ref{P.equiv}, the first contribution gives us the desired control
\[|N^\r_{j,k,m,\tilde\lambda}|^2 \leq  \tilde\lambda^{m-j} C(h_\star^{-1},\delta_\star^{-1},\norm{V}_{L^\infty})\sum_{|\k|=0}^k\big(\S_t\Pi^\r \partial^\k \partial_t^{j}V,\Pi^\r \partial^\k \partial_t^{j}V\big)_{L^2},\]
and the second contribution is estimated through
\[\tilde\lambda^{\frac{m-j}2} \norm{\S_t\Pi^\s \partial^\k \partial_t^{j}V}_{L^2}\leq  C(h_\star^{-1},\norm{V}_{L^\infty}) N^\s_{j,k,m,\tilde\lambda}.\]
As for the second line, we estimate differently depending on the value of $j$. If $j\geq m+1$, we use the gain of the prefactor $\tilde\lambda^{-1/2}$ stemming from the fact that only $j-1$ time derivatives are involved:
\[\tilde\lambda^{\frac{m-j}2}\norm{\partial_t^{j-1}\big(\S_x\partial_xV+\S_y\partial_yV\big)}_{H^k}\leq \tilde\lambda^{\frac{-1}2}C(h_\star^{-1},\Norm{V}_{s,m,\tilde\lambda}) \Norm{V}_{s,m,\tilde\lambda}.\]
When $j=m$, we do not have the gain of the prefactor $\tilde\lambda^{-1/2}$ but less than $m-1$ time derivatives are involved:
\[\tilde\lambda^{\frac{m-j}2}\norm{\partial_t^{j-1}\big(\S_x\partial_xV+\S_y\partial_yV\big)}_{H^k}\leq C(h_\star^{-1},\Norm{V}_{s,m,\tilde\lambda,(1)}) \Norm{V}_{s,m,\tilde\lambda,(1)}.\]
The contribution of the third line is estimated in the same way. As for the contribution of the last line, we deduce from the explicit expression of $\Pi^\r$ that
\[\norm{\Pi^\r\partial^\k \partial_t^{j-1}G(V)}_{L^2}\leq \mu^{\frac12}\norm{\partial_t^{j-1}G(V)}_{H^{k+1}}\]
hence the contribution of the last line also satisfies the same estimates as above.
Finally, the contribution of the fourth line is estimated by
\begin{align*}\tilde\lambda^{\frac{m-j}2}\norm{[\partial^\k ,\S_t]\partial_t^j V}_{L^2}&\leq C(h_\star^{-1},\norm{V}_{H^s}) \tilde\lambda^{\frac{m-j}2}\norm{\partial_t^j V}_{H^{k-1}}\\
&\leq C(h_\star^{-1},\Norm{V}_{s,m,\tilde\lambda,(1)})\big(N^\r_{j,k-1,m,\tilde\lambda}+N^\s_{j,k-1,m,\tilde\lambda}\big).
\end{align*}
Altogether, by Cauchy-Schwarz inequality, we find for any  $m\le j\leq s$ and $k\in \NN$ such that $k\leq s-j$:
\begin{multline}\label{est-regular}
N^\r_{j,k,m,\tilde\lambda}\leq C(h_\star^{-1},\delta_\star^{-1},\Norm{V}_{s,m,\tilde\lambda,(1)})\big(N^\s_{j,k,m,\tilde\lambda}+ N^\r_{j,k-1,m,\tilde\lambda}+N^\s_{j,k-1,m,\tilde\lambda}+\Norm{V}_{s,m,\tilde\lambda,(1)}\big)\\
+\tilde\lambda^{-1/2}C(h_\star^{-1},\Norm{V}_{s,m,\tilde\lambda})\Norm{V}_{s,m,\tilde\lambda}.
\end{multline}

\subsubsection{Completion}

Assuming $\tilde\lambda\leq\lambda\mu$ and using that under the assumptions of Proposition~\ref{P.equiv}, one has
\[\big\vert N^\r_{j,0,m,\tilde\lambda}\big\vert^2+\big\vert N^\s_{j,0,m,\tilde\lambda}\big\vert^2=\tilde\lambda^{m-j}\norm{\partial_t^j V}_{L^2}^2\leq C(h_\star^{-1},\delta_\star^{-1},\norm{V}_{L^\infty})\Norm{V}_{s,m,\tilde\lambda,(1)}^2, \]
we immediately deduce from~\eqref{est-singular}-\eqref{est-regular} that
\begin{multline*}\Norm{V}_{s,m,\tilde\lambda,(2)}\leq  C(h_\star^{-1},\delta_\star^{-1},\Norm{V}_{s,m,\tilde\lambda,(1)})\Norm{V}_{s,m,\tilde\lambda,(1)} \\+\tilde\lambda^{-1/2}C(h_\star^{-1},\Norm{V}_{s,m,\tilde\lambda})\Norm{V}_{s,m,\tilde\lambda}.
\end{multline*}
It follows that there exists $\nu=C(h_\star^{-1},\Norm{V}_{s,m,\tilde\lambda})$ such that provided
\[\lambda\mu\geq \tilde\lambda \geq \nu,\]
the assumptions of Proposition~\ref{P.equiv} are satisfied with $\delta_\star=1/2$, and
\[\Norm{V}_{s,m,\tilde\lambda,(2)}\leq  C(h_\star^{-1},\Norm{V}_{s,m,\tilde\lambda,(1)})\Norm{V}_{s,m,\tilde\lambda,(1)}.\]
Proposition~\ref{P.bootstrap} is proved.

\section{Preparing the initial data}\label{S.Prep}

This section is dedicated to the proof of Theorem~\ref{T.Prep}. As in Section~\ref{S.WP}, we fix $\lambda,\mu\in  (0,+\infty)$ and assume for simplicity that 
\[\lambda\geq 1 \quad ; \quad \mu\leq 1 \quad ; \quad \lambda\mu\geq 1\]
the general setting being straightforwardly deduced.
We shall prove by induction on $m$ that we can set $c^{(j)}$ for $j\in\{1,\cdots,m\}$ such that~\eqref{est-prep-U} and~\eqref{est-prep-c} hold. We first notice that after differentiating~\eqref{FG-adim} with respect to time and using Lemma~\ref{L.products} (we constantly use this Lemma in the following when estimating nonlinear differential operators), one has that any solution $U=(\zeta,\u,\eta,w)$ to~\eqref{FG-adim} satisfies
\footnote{
Here and below, we denote
\begin{equation}\label{norm-prep}
\Norm{U}_{s,m}^2\eqdef\sum_{j=0}^m\norm{U}_{H^{s-j}}^2.
\end{equation}
}
\begin{equation}\label{dt-j+1 U}
\norm{\partial_t^{j+1}U}_{H^{s-(j+1)}}\leq C(h_\star^{-1},\Norm{U}_{j,s})\big(\Norm{U}_{s,j}+\lambda \Norm{\eta-h}_{s,j}\big).
\end{equation}
Hence we can focus on proving inductively that 
\[\lambda\Norm{ \eta^{(m)}-h^{(m)}}_{s,m}(0)\leq M_m\]
with $M_m=C(h_\star^{-1},M_0)M_0$. Notice that the result for $m=0$ is trivial and the result for $m=1$ follows 
from setting $c^{(1)}=-h_0\nabla\cdot\u_0$, as well as the identity
\begin{equation}\label{dt-(n-h)}
\partial_t(\eta-h)+\u\cdot\nabla(\eta-h)=w+h\nabla\cdot\u.
\end{equation}
Differentiating the above, and applying once again~\eqref{FG-adim} on the first-order time derivatives, we find that any solution to~\eqref{FG-adim} satisfies
\begin{equation}\label{dt2-(n-h)}
\partial_t^2(\eta-h)=\mathfrak{r}[U]+\lambda\mu\mathfrak{s}[U,\eta-h]-\lambda h^{-2} \mathfrak{t}[h](\eta-h)
\end{equation}
where $\mathfrak{r}$, $\mathfrak{s}$ and $\mathfrak{t}$ are nonlinear differential operators (in space) of order two. The large prefactor that $\lambda\mu$ in front of $\mathfrak{s}$ is compensated by the fact that this operator is quadratic in $\eta-h$, and hence we collect truly singular terms in the operator $\mathfrak{t}$:
\[\mathfrak{t}[h] \psi=\psi-\frac\mu3 h^3\nabla\cdot\left(h^{-1}\nabla \psi \right).\]
For future reference, we also notice that if $U=U^{(1)}_0\eqdef(\zeta_0,\u_0,h_0,-h_0\nabla\cdot\u_0)$, then $\mathfrak{s}[U_0^{(1)}]=0$ and
\begin{equation}\label{a1}
\mathfrak{r}[U_0^{(1)}]=h_0\left(\u_0\cdot \nabla (\nabla\cdot\u_0)-(\nabla\cdot\u_0)^2-\Delta \zeta_0-\nabla\cdot((\u_0\cdot\nabla)\u_0)\right).\end{equation}

Rooting from~\eqref{dt-(n-h)} and~\eqref{dt2-(n-h)}, we now define
\[\mathfrak{C}_{j}(U)\eqdef \partial_t^{j} \left(\mathfrak{r}[U]+\lambda\mu\mathfrak{s}[U,\eta-h]\right)-\lambda \big[\,\partial_t^{j}\,,\, h^{-2} \mathfrak{t}[h]\,\big](\eta-h)\]
and
\[\mathfrak{S}_m(U)\eqdef \sum_{k=0}^{\lfloor m/2\rfloor} (-\lambda h^{-2}\mathfrak{t}[h])^{k} \mathfrak{C}_{m-2k}(U)\]
so that any solution to~\eqref{FG-adim} satisfies for any $m\geq 2$
\begin{equation}\label{dtm-(n-h)}
\partial_t^m(\eta-h)-\mathfrak{S}_{m-2}(U)=
\begin{cases}
(-\lambda h^{-2}\mathfrak{t}[h] )^{m/2}(\eta-h)& \text{if $m$ is even,}\\
(-\lambda h^{-2}\mathfrak{t}[h])^{(m-1)/2}\partial_t(\eta-h)& \text{if $m$ is odd.}
\end{cases}
\end{equation}
We deduce the following expression for $c^{(m)}$:
\begin{equation}\label{def-cj}
(- h_0^{-2}\mathfrak{t}[h_0])^{\lfloor m/2\rfloor } c^{(m)} =-\mathfrak{S}_{m-2}(U_0^{(m-1)})- (-\lambda h_0^{-2}\mathfrak{t}[h_0])^{\lfloor m/2\rfloor }s^{(m-2)}.
\end{equation}
where $\mathfrak{S}_{m-2}(U_0^{(m-1)})$ is the differential operator of order $m$ obtained when all time derivatives have been replaced by spatial derivatives through~\eqref{FG-adim}, and
\[s^{(m)}\eqdef \begin{cases}
\sum_{k=1}^{m/2} \lambda^{-k} c^{(2k)} &\text{ if $m$ is even,}\\
\sum_{k=1}^{(m-1)/2} \lambda^{-k} c^{(2k+1)}-\sum_{k=1}^{(m+1)/2} \lambda^{-k} \u_0\cdot\nabla c^{(2k)}&\text{ if $m$ is odd.}
\end{cases}
\]
Notice $c^{(m)}$ is well-defined by~\eqref{def-cj} and induction on $m$, using Lemma~\ref{L.c}. We prove below that this choice allows to obtain the desired estimates.

Assuming $m$ is even for simplicity (the case $m$ odd is treated in the same way, with straightforward adjustments), we have by~\eqref{def-cj}
\[
 (-\lambda h_0^{-2}\mathfrak{t}[h_0])^{m/2}s^{(m-2)}=- \lambda h_0^{-2}\mathfrak{t}[h_0]\mathfrak{S}_{m-4}(U_0^{(m-3)}).\]
Now, we have by repeated use of~\eqref{FG-adim} 
and direct product estimates that
\begin{multline*}\norm{\lambda h_0^{-2}\mathfrak{t}[h_0] \big(\mathfrak{S}_{m-4}(U_0^{(m-3)})-\mathfrak{S}_{m-4}(U_0^{(m-1)})\big)}_{H^{s-m}}\\
\leq C_{m}' \lambda^{1+\frac{m-4}2} \left( \norm{U_0^{(m-1)}-U_0^{(m-3)}}_{H^{s-m}} +\mu^{\frac{m-2}2}\norm{U_0^{(m-1)}-U_0^{(m-3)}}_{H^{s-2}} \right)
\end{multline*}
where $C_{m}'=C(h_\star^{-1},\norm{U_0^{(m-1)}}_{H^s},\norm{U_0^{(m-3)}}_{H^s})$. Moreover, we have by definition
 \[- \lambda h_0^{-2}\mathfrak{t}[h_0]\mathfrak{S}_{m-4}(U_0^{(m-1)})
=\mathfrak{S}_{m-2}(U_0^{(m-1)})-\mathfrak{C}_{m-2}(U_0^{(m-1)}) \]
and
\[\norm{\mathfrak{C}_{m-2}(U_0^{(m-1)})}_{H^{s-m}}\leq C_m
\Big(\Norm{U^{(m-1)}}_{s,m-2}+\lambda \Norm{\eta^{(m-1)}-h^{(m-1)}}_{s,m-3} \Big)
\]
with $C_m=C(h_\star^{-1},\Norm{U^{(m-1)}}_{s,m-2},\lambda \Norm{\eta^{(m-1)}-h^{(m-1)}}_{s,m-3})$.
Combining the above and using the induction hypotheses~\eqref{est-prep-U} and~\eqref{est-prep-c}, we find
\[\norm{\mathfrak{S}_{m-2}(U_0^{(m-1)})+ (-\lambda h_0^{-2}\mathfrak{t}[h_0])^{m/2}s^{(m-2)}}_{H^{s-m}}\leq  C(h_\star^{-1},M_0)M_0.\]
It follows that by Lemma~\ref{L.c} that $c^{(m)}$ is well-defined by~\eqref{def-cj} and satisfies 
\[\norm{c^{(m)}}_{H^{s-m}}+\mu^{m/2}\norm{c^{(m)}}_{H^{s}}\leq  C(h_\star^{-1},M_0)M_0.\]
Notice that we have in particular, since $\lambda\mu\geq 1$, $\norm{U_0^{(m)}}_{H^s}\leq C(h_\star^{-1},M_0)M_0$.
We also observe that for any $j\leq m$, one has as above
\[
\lambda\norm{\mathfrak{S}_{j-2}(U_0^{(m-1)})-\mathfrak{S}_{j-2}(U_0^{(m)})}_{H^{s-j}}\leq C(h_\star^{-1},M_0)M_0 .\]
Using this estimate with $j=m$ in~\eqref{def-cj}, plugging into~\eqref{dtm-(n-h)} and using the definition~\eqref{def-id} shows that with our choice of $c^{(j)}$, one has 
\[\lambda \norm{\partial_t^m(\eta^{(m)}-h^{(m)})}_{H^{s-m}} (0)\leq  C(h_\star^{-1},M_0)M_0.\]
The corresponding estimates for time derivatives of lower order are obtained using the estimate directly into~\eqref{dtm-(n-h)} and using the induction hypothesis. 
Hence we proved
\[\lambda\Norm{ \eta^{(m)}-h^{(m)}}_{s,m}(0)\leq  C(h_\star^{-1},M_0)M_0,\]
and we deduce from~\eqref{dt-j+1 U}
\[\Norm{ U^{(m)}}_{s,m+1}(0)\leq  C(h_\star^{-1},M_0)M_0.\]

This completes the inductive proof of~\eqref{est-prep-U} and~\eqref{est-prep-c}. We have already displayed $c^{(1)}=-h_0\nabla\cdot\u_0 $, and~\eqref{a1} with~\eqref{def-cj} yields 
\begin{align*}( h_0^{-2}\mathfrak{t}[h_0])c^{(2)} &=\mathfrak{C}_0(U_0^{(m-1)})\\
&=h_0\left(\u_0\cdot \nabla (\nabla\cdot\u_0)-(\nabla\cdot\u_0)^2-\Delta \zeta_0-\nabla\cdot((\u_0\cdot\nabla)\u_0)\right),
\end{align*}
from which we deduce the explicit expression for $c^{(2)}$.

\section{Conclusion}\label{S.conclusion}

We have shown the relevance of the Favrie-Gavrilyuk system for producing approximate solutions to the Green-Naghdi system ---and ultimately the water-waves system. To this aim, we have exhibited the role of the shallowness parameter, which may induce undesirable oscillations in space in the shallow-water regime. In order to avoid these oscillations it appears necessary ---or at least advisable--- to suitably set the initial data for the augmented variables $\eta,w$. The following setting is expected to produce good results: after non-dimensionalizing the equations, set $\lambda\gtrsim \mu^{-1}$ where $\mu$ is the shallowness parameter, and given the initial data $h\id{t=0}=h_0$ and $\u\id{t=0}=\u_0$, let

\[ w\id{t=0}=-h_0\nabla\cdot\u_0  \quad \text{ and }\quad  \eta\id{t=0}=h_0+\lambda^{-1} c \] 
where $c$ is the unique solution to
\[h_0^{-3}\mathfrak{t}[h_0] c=-\Delta \zeta_0+\u_0\cdot \nabla (\nabla\cdot\u_0)-(\nabla\cdot\u_0)^2-\nabla\cdot((\u_0\cdot\nabla)\u_0)\]
with
\[\mathfrak{t}[h_0] c\eqdef c-\frac\mu3 h_0^3\nabla\cdot\left(h_0^{-1}\nabla c \right).\]

One of the main challenges for future studies on the Favrie-Gavrilyuk system would consist in taking into account variations of the bottom topography, which yield new singular terms, but with variable coefficients. We refer to~\cite{BreschMetivier10} for a related problem. Proposing a well-adapted  numerical scheme will also most certainly require a tailored analysis, in particular due to the fact that the linearized system is not uniformly stable as $\lambda\to \infty$.

\section*{Acknowledgments}

The author is grateful to Nicolas Favrie and Sergey Gavrilyuk for enlightening discussions and encouragement.

\end{document}